\documentclass[pdflatex,sn-mathphys-num]{sn-jnl}

\usepackage{ragged2e}
\usepackage{geometry}
\usepackage{graphicx}%
\usepackage{multirow}%
\usepackage{amsmath,amssymb,amsfonts}%
\usepackage{amsthm}%
\usepackage{mathrsfs}%
\usepackage[title]{appendix}%
\usepackage{xcolor}%
\usepackage{tikz-cd}
\usepackage{textcomp}%
\usepackage{manyfoot}%
\usepackage{booktabs}%
\usepackage{algorithm}%
\usepackage{algorithmicx}%
\usepackage{algpseudocode}%
\addtolength{\oddsidemargin}{-.3in}
\addtolength{\evensidemargin}{-.6in}
\addtolength{\textwidth}{0.8in}
\addtolength{\topmargin}{-.475in}
\addtolength{\textheight}{1.75in}
\usepackage{ragged2e}
\usepackage{listings}%
\usepackage{etoolbox}  
\makeatletter
\patchcmd{\@maketitle}{\artauthors}{{\artauthors}}{\centering}{}
\makeatother


\theoremstyle{thmstyleone}%

\newtheorem{theorem}{Theorem}[section]
\usepackage{thmtools, thm-restate}
\newtheorem{conjecture}[theorem]{Conjecture}
\newtheorem{definition}[theorem]{Definition}

\newtheorem{lemma}[theorem]{Lemma}

\newtheorem{proposition}[theorem]{Proposition}

\newtheorem{remark}[theorem]{Remark}
\theoremstyle{thmstylethree}%
\raggedbottom

\begin{document}
\renewcommand\labelenumi{(\theenumi)}
\title[Article Title]{\textbf{The $\mu-$invariant of fine Selmer groups associated to general Drinfeld modules}}

\author{Hang Chen}


\abstract{ Let \textit{F} be a global function field over the finite field $\mathbb{F}_q$ where $q$ is a prime power and $A$ be the ring of elements in $F$ regular outside $\infty$. Let $\phi$ be an arbitrary Drinfeld module over \textit{F}. For a fixed non-zero prime ideal $\mathfrak{p}$ of \textit{A}, we show that on the constant $\mathbb{Z}_{\textit{p}}-$extension $\mathcal{F}$ of \textit{F}, the Pontryagin dual of the fine Selmer group associated to the $\mathfrak{p}-$primary torsion of $\phi$ over $\mathcal{F}$ is a finitely generated Iwasawa module such that its Iwasawa $\mu-$invariant vanishes. This provides a generalization of the results given in \cite{bib1}.}

\keywords{Iwasawa module, Selmer group, Drinfeld module, Galois cohomology}



\maketitle
\section{Introduction}\label{sec1}
\justifying
Let $K$ be a number field and $p$ be a prime number. A $\mathbb{Z}_p-$extension $\textbf{K}/K$ is the direct limit of a sequence of finite Galois extensions as below
\begin{align*}
    K = K_0 \subseteq K_1 \subseteq \cdots\subseteq K_n \subseteq K_{n+1} \subseteq \cdots \subseteq \textbf{K}
\end{align*} such that $\text{Gal}(K_n/K) \simeq \mathbb{Z}/p^n\mathbb{Z}$ for all $n$. If we denote the $p-$Sylow subgroup of the class group of $K_n$ by $\text{Cl}_p(K_n)$, then the classical Iwasawa theory tells us that for sufficiently large $n$, we have that
\begin{align} \label{p-exponent}
   \#\text{Cl}_p(K_n) = p^{\lambda n + \mu p^n + \nu}
\end{align} where $\lambda, \mu \in \mathbb{Z}_{\geq 0}$ and $\nu \in \mathbb{Z}$ are independent on $n$ (cf. \cite[\textbf{Theorem}~\textbf{13.13}]{bib3}) and these three integers are uniquely associated to $\textbf{K}$. Moreover, the inverse limit 
\begin{align*}
    X = \varprojlim_n \operatorname{Cl}_p(K_n)
\end{align*} is a finitely generated and torsion $\mathbb{Z}_p[\left[T\right]]-$module. Furthermore, Iwasawa made the following conjecture (cf. \cite{bib7}).
\begin{conjecture} [\textbf{Iwasawa}] For the cyclotomic $\mathbb{Z}_p-$extension $\textbf{K}$ of any number field $K$, the $\mu-$invariant vanishes.
\end{conjecture}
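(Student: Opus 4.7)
The plan is to translate vanishing of the algebraic $\mu$-invariant of $X=\varprojlim_n \mathrm{Cl}_p(K_n)$ into a statement about the $\mu$-invariant of a $p$-adic $L$-function via an Iwasawa Main Conjecture, and then to establish the analytic vanishing through an equidistribution argument. Set $\Gamma=\mathrm{Gal}(\textbf{K}/K)\cong\mathbb{Z}_p$ and $\Lambda=\mathbb{Z}_p[[\Gamma]]\cong\mathbb{Z}_p[[T]]$. The module $X$ is finitely generated and $\Lambda$-torsion, and $\mu(X)=0$ is equivalent to $X/pX$ being finite, equivalently to $X$ having no pseudo-isomorphic summand of the form $\Lambda/(p^m)$ with $m\geq 1$. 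So the goal is either to establish a uniform bound on $\dim_{\mathbb{F}_p}(X_n/pX_n)$, or to match $X$ to an analytic object whose $\mu$ is independently known to vanish.

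First I would dispose of the cases where a Main Conjecture is already available. For $K$ abelian over $\mathbb{Q}$, the Mazur--Wiles theorem identifies the characteristic ideal of (the odd part of) $X$ with the Kubota--Leopoldt $p$-adic $L$-function $L_p$, reducing the claim to $\mu(L_p)=0$, which is the Ferrero--Washington theorem, provable either by their original $p$-adic normality argument for Stickelberger-type expansions or by Sinnott's distribution-theoretic reformulation. For totally real $K$ one applies Wiles's Main Conjecture together with $\mu=0$ for the Deligne--Ribet $p$-adic $L$-function; for CM $K$ one invokes Rubin's Main Conjecture in combination with the vanishing of $\mu$ for Katz's two-variable $p$-adic $L$-function established by Hida, Finis, and Hsieh. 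In each case the template is the same: algebraic $\mu$ equals analytic $\mu$ via the Main Conjecture, and the analytic $\mu$ vanishes by equidistribution modulo $p$ for an explicit $\mathbb{Z}_p$-valued measure.

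For a general number field $K$, I would next try to reduce to one of these settings by descent. Choose an auxiliary finite Galois extension $K'/K$ such that $K'$ contains a CM or totally real subfield $K^+$ on which $\mu=0$ is already known, push the vanishing up to $X(\textbf{K}'/K')$ by controlling the behaviour of $\mu$ in the layer $K'/K^+$ (using that $\mu$ does not jump in an uncontrolled way along unramified or tamely ramified extensions), and descend to $X$ via the corestriction/transfer map, showing through inflation--restriction and local Iwasawa cohomology that the kernel and cokernel are finitely generated over $\mathbb{Z}_p$ and hence do not contribute to $\mu$.

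The hard part will be this last step in full generality. There is no general recipe producing a CM or totally real cover of an arbitrary $K$ to which the known $\mu=0$ results can be applied, and no Main Conjecture is presently available outside the totally real and CM settings, so both halves of the template can fail simultaneously. Absent a Main Conjecture there is no direct route from the module-theoretic $X$ to an analytic object; absent an equidistribution input there is no mechanism to rule out a hidden $\Lambda/(p^m)$ summand. This is precisely why Iwasawa's conjecture remains open for arbitrary number fields, and any complete attack would have to either extend the Main Conjecture machinery well beyond its present reach, or uncover a new purely algebraic mechanism forcing $X/pX$ to be finite without analytic input. The function-field analogue treated in the body of this paper is an instance of the latter strategy, and the techniques developed there (Galois cohomology of fine Selmer groups, control theorems over constant $\mathbb{Z}_p$-extensions) are what one would want to import back to the number-field setting.
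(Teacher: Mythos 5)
This statement is labelled a \emph{conjecture} in the paper, and the paper supplies no proof of it: it is quoted purely as motivation, with the remark that only the abelian case over $\mathbb{Q}$ is known (Ferrero--Washington). So there is no ``paper's own proof'' to compare against, and your proposal, read as a proof, necessarily contains a genuine gap --- namely the one you yourself identify in your final paragraph. Your survey of the known territory is accurate: the Main Conjecture plus analytic $\mu=0$ template does settle the abelian case, and the totally real and CM cases have analogous (conditional or established) machinery. But the descent step you sketch in your third paragraph is not viable as stated: there is no construction of an auxiliary extension $K'/K$ placing an arbitrary number field under a CM or totally real field in a way that lets you control $\mu$ in both directions, and the claim that $\mu$ ``does not jump in an uncontrolled way'' along the intermediate layers is precisely what is unknown --- indeed Iwasawa himself produced non-cyclotomic $\mathbb{Z}_p$-extensions with $\mu>0$, so any such control must use the cyclotomic hypothesis in an essential way that your argument does not supply. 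In short: your write-up is a correct and honest account of why the conjecture is open, but it is not a proof, and the paper does not claim one either; the paper's actual contribution is the function-field analogue (Theorem 1.3), where the zeta-function argument of Theorem 3.12 plays the role that no known mechanism plays over number fields.
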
 \noindent If this conjecture is true, it implies that the inverse limit $X$ above is pseudo-isomorphic (cf. \cite[p.272]{bib3}) to a free $\mathbb{Z}_p-$module of rank $\lambda$ where $\lambda$ is from the formula $(\ref{p-exponent})$. This conjecture is solved by Ferrero and Washington if $K/\mathbb{Q}$ is abelian (cf. \cite{bib4}). \\[5mm] Coates and Sujatha (cf.\cite[Conjecture A]{bib6}) formulated an analogue of this conjecture in the context of the fine Selmer groups associated to elliptic curves over number fields: given an elliptic curve $E$ over a number field $K$, we denote the Pontryagin dual of the fine Selmer group of $E$ with respect to $\textbf{K}$ by $Y(E/\textbf{K})$ where $\textbf{K}$ is the cyclotomic $\mathbb{Z}_p-$extension of $K$ (cf. \cite[(42), (47)]{bib6}). 
\begin{conjecture} [\textbf{Coates and Sujatha}] For all elliptic curves $E$ over a number field $K$, $Y(E/\emph{\textbf{K}})$ is a finitely generated $\mathbb{Z}_p-$module.
\end{conjecture}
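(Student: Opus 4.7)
The plan is to reduce the finite generation of $Y(E/\textbf{K})$ over $\mathbb{Z}_p$ to a classical Iwasawa-theoretic statement about $p$-class groups, and then appeal to Iwasawa's $\mu = 0$ conjecture as a black box. Let $S$ be the finite set of places of $K$ consisting of the archimedean primes, the primes dividing $p$, and the primes of bad reduction of $E$. Let $K_S$ denote the maximal extension of $K$ unramified outside $S$ and write $G_S(\textbf{K}) = \mathrm{Gal}(K_S/\textbf{K})$. By its defining local conditions, the fine Selmer group embeds into $H^1(G_S(\textbf{K}), E_{p^\infty})$ with kernel and cokernel expressible through a finite direct sum of local cohomology groups at primes in $S$, each of which is cofinitely generated over $\mathbb{Z}_p$. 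So the task reduces to showing that $H^1(G_S(\textbf{K}), E_{p^\infty})^{\vee}$ is finitely generated over $\mathbb{Z}_p$.

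Next I would perform Kummer-theoretic descent along the trivializing extension $L = \textbf{K}(E_{p^\infty})$. The inflation-restriction sequence for $\textbf{K} \subseteq L$, combined with the cofinite generation of $H^i(\mathrm{Gal}(L/\textbf{K}), E_{p^\infty})$ for $i = 1, 2$, identifies $H^1(G_S(\textbf{K}), E_{p^\infty})$ up to finitely generated $\mathbb{Z}_p$-error with $\mathrm{Hom}(G_S(L)^{\mathrm{ab}}(p), E_{p^\infty})$. Taking Pontryagin duals, this matches up to finitely generated $\mathbb{Z}_p$-modules with the Galois group $X_S(L)$ of the maximal abelian pro-$p$ extension of $L$ unramified outside $S$. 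Classical Iwasawa theory then bounds $X_S(L)$ layer by layer in terms of the cyclotomic $\mu$-invariants of the intermediate number fields climbing up to $L$, and an input of Iwasawa-style vanishing at each layer would yield the required finite generation over $\mathbb{Z}_p$, which would then propagate back to $Y(E/\textbf{K})$.

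The main obstacle, and the reason this conjecture remains open in general, is precisely this last reduction: when $L$ is an abelian extension of $\mathbb{Q}$ one can invoke the theorem of Ferrero and Washington to conclude, but for a generic elliptic curve $E$ the tower $L = \textbf{K}(E_{p^\infty})$ is a genuinely non-abelian $p$-adic Lie extension whose cyclotomic $\mu$-invariant is not known to vanish. An unconditional resolution therefore appears to demand either a new Ferrero--Washington type input adapted to the $L$-tower, or a module-theoretic argument that bypasses the class-field-theoretic detour altogether, for instance through CM theory or congruences between modular forms when such structure is present on $E$. This is where I would expect the argument to stall, and this is the motivation for first establishing the \emph{function field} analogue, as carried out in the body of the present paper for Drinfeld modules over the constant $\mathbb{Z}_p$-extension.
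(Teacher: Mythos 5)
You correctly recognized that this statement is an open conjecture, not a result the paper proves. The paper records Conjecture~1.2 (Coates--Sujatha) purely as motivation: it is the number-field analogue of the vanishing-$\mu$ statement that the paper \emph{does} prove in the Drinfeld-module setting (Theorem~1.3), and no proof of the number-field case is offered or claimed anywhere in the text. So there is no ``paper's own proof'' to compare against, and a correct response is exactly what you gave: a reduction strategy together with an honest identification of where it stalls.

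Your sketch also matches the strategy that appears in the literature (and that the paper itself adapts to the function-field side). The chain you describe --- control the local terms at $S$, reduce to $H^1(G_S(\textbf{K}), E_{p^\infty})^{\vee}$, descend via inflation--restriction along $L = \textbf{K}(E_{p^\infty})$, and land on the class-field-theoretic module $X_S(L)$ --- is precisely the mechanism the paper carries out with $\phi[\mathfrak{p}^\infty]$ in place of $E_{p^\infty}$, $\mathcal{F}$ in place of $\textbf{K}$, and $\mathcal{L} = \mathcal{F}(\phi[\mathfrak{p}])$ in place of $L$ (compare the inflation--restriction step in Proposition~5.5 and the class-group input in Theorem~3.12). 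The one substantive difference is the one you flag: in the function-field setting, the needed vanishing of $\mu$ for $X_S(\mathcal{F})$ is a consequence of the zeta-function formula for constant extensions (Theorem~11.5 of Rosen, invoked in Theorem~3.12), whereas in the number-field setting the analogous input over the generically non-abelian tower $\textbf{K}(E_{p^\infty})$ is not available, which is exactly why Conjecture~1.2 remains open. Your final paragraph articulates this obstruction correctly. Had you instead attempted to \emph{close} that gap by asserting a $\mu = 0$ result over $L$, the proposal would have been wrong; as written, it is an accurate diagnosis rather than a faulty proof.
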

\noindent Ray (cf. \cite{bib1}) reformulated this conjecture over global function fields by considering fine Selmer groups associated to Drinfeld $\mathbb{F}_q[T]-$modules. Namely, Ray equipped the Pontryagin dual of a fine Selmer group with an Iwasawa module structure and showed that it is finitely generated and its $\mu-$invariant vanishes. \\[5mm] In our work, we aim to generalize Ray's result (cf.\cite{bib1}) by considering Drinfeld modules $\phi$ over any global function field $F$. Let $A$ be the ring of elements in $F$ regular outside a fixed place $\infty$. Given a fixed a non-zero prime ideal $\mathfrak{p}$ of $A$, we denote the unique place of $F$ corresponding to the prime ideal $\mathfrak{p}$ also by $\mathfrak{p}$. Consider the set $S$ 
where
\begin{align}\label{S}
    S:= \{ \mathfrak{p}, \infty \} \cup \{ \text{places corresponding to the bad reductions of } \phi\} .
\end{align}
Denote the constant $\mathbb{Z}_p-$extension of $F$ by $\mathcal{F}$ and denote the union of all the $\mathfrak{p}^n-$torsion of $\phi$ by $\phi[\mathfrak{p}^{\infty}]$. The fine Selmer group $\text{Sel}^S_0(\phi[\mathfrak{p}^{\infty}]/\mathcal{F})$ is a subgroup of the first Galois cohomology group $H^1(\mathcal{F}^{\text{sep}}/\mathcal{F}, \phi[\mathfrak{p^{\infty}}])$ which consists of the elements being trivial when restricting to the decomposition groups with respect to the places of $\mathcal{F}$ above $S$.\\[5mm] Denote the completion of $A$ with respect to $\mathfrak{p}$ by  $A_{\mathfrak{p}}$ and denote the Iwasawa algebra $A_{\mathfrak{p}}[\left[\operatorname{Gal}(\mathcal{F}/F)\right]]$ by $\Lambda(A_{\mathfrak{p}})$. We will equip a $\Lambda( A_{\mathfrak{p}})-$module on the fine Selmer group $\text{Sel}^S_0(\phi[\mathfrak{p}^{\infty}]/\mathcal{F})$ and there is an induced $\Lambda(A_{\mathfrak{p}})-$module structure on its Pontryagin dual  $Y^S(\phi[\mathfrak{p}^{\infty}]/\mathcal{F})$. We will show that its Pontryagin dual $Y^S(\phi[\mathfrak{p}^{\infty}]/\mathcal{F})$ is a finitely generated $\Lambda( A_{\mathfrak{p}})-$module such that its $\mu-$invariant vanishes. Furthermore, we show that the rank of $Y^S(\phi[\mathfrak{p}^{\infty}]/\mathcal{F})$ as an $ A_{\mathfrak{p}}-$module is equal to the $\lambda-$ invariant and we provide an upper bound for $\lambda$. \\[5mm] We summarize our main results in the following statement, which is a generalization of $\cite[\textbf{Theorem 1.1}]{bib1}$ in our context.
\begin{theorem} \label{theorem 1.3} Let $\phi$ be a Drinfeld module over $F$ of rank $r$ and $\mathfrak{p}$ be a non-zero prime ideal of $A$. Let $\mathcal{F}$ be the constant $\mathbb{Z}_p-$extension of $F$. We consider the set $S$ as in $(\ref{S})$ and denote the set of places in $\mathcal{F}$ above $S$ by $S(\mathcal{F})$, then the following statements hold
\begin{enumerate}[label={\arabic{enumi}}.]
\item The Pontryagin dual $Y^S(\phi[\mathfrak{p}^{\infty}]/\mathcal{F})$ of the fine Selmer group $\text{\normalfont\ Sel}_0^S(\phi[\mathfrak{p}^{\infty}]/\mathcal{F})$ is a finitely generated torsion $\Lambda( A_{\mathfrak{p}})-$module such that its $\mu-$Iwasawa invariant vanishes.
\item The Pontryagin dual $Y^S(\phi[\mathfrak{p}^{\infty}]/\mathcal{F})$ is a finitely generated module over $ A_{\mathfrak{p}}$ with its $A_{\mathfrak{p}}-$rank equal to the $\lambda-$Iwasawa invariant.
\item Let $\text{\normalfont \ Sel}_0^S(\phi[\mathfrak{p}]/\mathcal{F})$ be the residual fine Selmer group. Then the $\lambda-$Iwasawa invariant satisfies the following bound
\begin{align*}
    \lambda\leq {\dim}_{\mathbb{F}_\mathfrak{p}} \text{\normalfont\ Sel}_0^S(\phi[\mathfrak{p}]/\mathcal{F}) + \sum_{w \in S(\mathcal{F})} {\dim}_{\mathbb{F}_{\mathfrak{p}}}(H^0(\mathcal{F}_w^{\operatorname{sep}}/\mathcal{F}_w, \phi[\mathfrak{p^{\infty}}]\otimes_{ A_{\mathfrak{p}}}\mathbb{F}_{\mathfrak{p}}))
\end{align*} where $\mathbb{F}_{\mathfrak{p}}$ is the residue field $ A_{\mathfrak{p}}/\mathfrak{p}A_{\mathfrak{p}}$.
\end{enumerate}
\end{theorem}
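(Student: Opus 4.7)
The plan is to establish parts (1), (2), and (3) simultaneously by controlling the $\mathfrak{p}$-cotorsion of $Y^S(\phi[\mathfrak{p}^\infty]/\mathcal{F})$ and then applying Nakayama's lemma over the complete local ring $\Lambda(A_\mathfrak{p})$. The guiding principle, following the strategy of \cite{bib1} in the special case $A=\mathbb{F}_q[T]$, is that finite generation of the quotient $Y^S(\phi[\mathfrak{p}^\infty]/\mathcal{F})/\mathfrak{p} Y^S(\phi[\mathfrak{p}^\infty]/\mathcal{F})$ as an $\mathbb{F}_\mathfrak{p}$-vector space simultaneously implies finite generation of $Y^S$ as an $A_\mathfrak{p}$-module, vanishing of the $\mu$-invariant, equality of the $A_\mathfrak{p}$-rank with $\lambda$, and the quantitative bound on $\lambda$.

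To produce this bound, I would start from the short exact sequence
\begin{align*}
 0 \to \phi[\mathfrak{p}] \to \phi[\mathfrak{p}^\infty] \xrightarrow{\pi} \phi[\mathfrak{p}^\infty] \to 0
\end{align*}
of $\operatorname{Gal}(\mathcal{F}^{\operatorname{sep}}/\mathcal{F})$-modules, where $\pi$ is multiplication by a uniformizer of $A_\mathfrak{p}$, and run the induced global and local long exact sequences in Galois cohomology. A diagram chase, using the snake lemma on the defining maps of the fine Selmer groups, produces a natural comparison between $\textnormal{Sel}_0^S(\phi[\mathfrak{p}]/\mathcal{F})$ and $\textnormal{Sel}_0^S(\phi[\mathfrak{p}^\infty]/\mathcal{F})[\mathfrak{p}]$ whose kernel and cokernel are controlled by the local $H^0$-groups appearing in the statement of (3). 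After Pontryagin duality this translates into the inequality
\begin{align*}
 \dim_{\mathbb{F}_\mathfrak{p}}\bigl(Y^S(\phi[\mathfrak{p}^\infty]/\mathcal{F})/\mathfrak{p}\bigr) \leq \dim_{\mathbb{F}_\mathfrak{p}} \textnormal{Sel}_0^S(\phi[\mathfrak{p}]/\mathcal{F}) + \sum_{w\in S(\mathcal{F})} \dim_{\mathbb{F}_\mathfrak{p}} H^0(\mathcal{F}_w^{\operatorname{sep}}/\mathcal{F}_w, \phi[\mathfrak{p}^\infty]\otimes_{A_\mathfrak{p}} \mathbb{F}_\mathfrak{p}).
\end{align*}

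Given this inequality, finiteness of the right-hand side reduces essentially to finiteness of the residual Selmer group, which I would handle by embedding $\textnormal{Sel}_0^S(\phi[\mathfrak{p}]/\mathcal{F})$ into $H^1(G_S(\mathcal{F}), \phi[\mathfrak{p}])$ and arguing that this $H^1$ is finite; the latter follows from the finite topological generation of the Galois group $G_S(\mathcal{F})$ of the maximal extension of $\mathcal{F}$ unramified outside $S(\mathcal{F})$, combined with the fact that $\phi[\mathfrak{p}]$ is a finite $\mathbb{F}_\mathfrak{p}$-module of rank $r$. Once the right-hand side of the displayed inequality is known to be finite, Nakayama's lemma produces finite generation of $Y^S$ over $A_\mathfrak{p}$; combined with $\Lambda(A_\mathfrak{p})$-torsionness (which follows from a standard descent argument using that $\operatorname{Gal}(\mathcal{F}/F)$ is procyclic), the structure theorem for finitely generated torsion $\Lambda(A_\mathfrak{p})$-modules forces $\mu=0$ and makes the $A_\mathfrak{p}$-rank coincide with $\lambda$. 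The bound in (3) then falls out because $\lambda = \operatorname{rank}_{A_\mathfrak{p}} Y^S \leq \dim_{\mathbb{F}_\mathfrak{p}} Y^S/\mathfrak{p} Y^S$.

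The principal obstacle I anticipate is establishing the finiteness of $\textnormal{Sel}_0^S(\phi[\mathfrak{p}]/\mathcal{F})$ in full generality. For an arbitrary global function field $F$ and an arbitrary Drinfeld module $\phi$, the mod-$\mathfrak{p}$ Galois representation can be delicate, and the analysis of $G_S(\mathcal{F})$ must carefully accommodate the places in $S(\mathcal{F})$, in particular $\infty$ and the places of bad reduction. The argument of \cite{bib1} for $A=\mathbb{F}_q[T]$ uses simplifying features of that setting; replacing them in the general case will likely require a global-duality input of Poitou--Tate type over $F$, together with a control-theoretic descent from $\mathcal{F}$ to intermediate layers $F_n$, and this is where the bulk of the technical work will lie.
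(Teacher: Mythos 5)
Your high-level plan for parts (1), (2), and the bound in (3) is essentially the one used in the paper: compare $\operatorname{Sel}_0^S(\phi[\mathfrak{p}]/\mathcal{F})$ with $\operatorname{Sel}_0^S(\phi[\mathfrak{p}^\infty]/\mathcal{F})[\mathfrak{p}]$ via a snake-lemma diagram built from the short exact sequence $0\to\phi[\mathfrak{p}]\to\phi[\mathfrak{p}^\infty]\xrightarrow{\pi}\phi[\mathfrak{p}^\infty]\to 0$, dualize, use topological Nakayama to get finite generation and torsionness, and read off $\mu=0$ and $\lambda=\operatorname{rank}_{A_\mathfrak{p}}Y^S$ from the structure theorem (this is the paper's Theorem~\ref{theorem 4.6} combined with Lemma~\ref{lemma 5.3} and Proposition~\ref{proposition 5.4}). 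The quantitative bound falls out exactly as you describe. So far this matches the paper's route.

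However, there is a genuine gap where you need it most: the finiteness of $\operatorname{Sel}_0^S(\phi[\mathfrak{p}]/\mathcal{F})$. You propose to deduce it from finiteness of $H^1(G_S(\mathcal{F}),\phi[\mathfrak{p}])$, which you would in turn get from finite topological generation of $G_S(\mathcal{F})=\operatorname{Gal}(F_S/\mathcal{F})$. This Galois group sits over an \emph{infinite} extension $\mathcal{F}$, and finite topological generation is not available here; indeed, were a statement of this shape true in the number field setting it would instantly resolve Iwasawa's $\mu=0$ conjecture and Conjecture A of Coates--Sujatha, both of which remain open in general. The finiteness of the residual fine Selmer group is precisely the hard content, and it cannot be obtained by a soft group-theoretic bound on $H^1$.

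What the paper actually does (Proposition~\ref{proposition 5.5}) is not to bound all of $H^1$, but to show that the cocycles lying in the residual fine Selmer group factor through a very specific finite quotient. Passing to $L=F(\phi[\mathfrak{p}])$ and its constant $\mathbb{Z}_p$-extension $\mathcal{L}$, inflation--restriction reduces matters to homomorphisms from $\operatorname{Gal}(F_S/\mathcal{L})$ to $\mathbb{F}_\mathfrak{p}^{\oplus r}$, and the fine Selmer condition (triviality at $S$) forces such a homomorphism to factor through $X_{S(L)}(\mathcal{L})/pX_{S(L)}(\mathcal{L})$, where $X_{S(L)}(\mathcal{L})$ is the Iwasawa module built in Section~3 out of the $p$-parts of $S$-split ray class groups along the tower. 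The finiteness of that quotient is the deep ingredient: it is Theorem~\ref{theorem 3.12}, which proves the $\mu$-invariant of $X_{S(L)}(\mathcal{L})$ vanishes by comparing the growth formula $e_n=\lambda n+\mu p^n+\nu$ with the linear growth $e_n'=\lambda' n+\nu'$ of the $p$-part of the full class groups, the latter coming from the zeta function of the function field (Rosen, Theorem~11.5), i.e.\ ultimately from the Weil conjectures for curves. This class-field-theoretic and analytic input over function fields is what replaces the Ferrero--Washington theorem of the number field setting and is entirely absent from your proposal; without it, the argument does not close.

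One smaller remark: your suggestions of a Poitou--Tate duality input and a control-theoretic descent to the layers $F_n$ are not what the paper uses; they are plausible auxiliary tools in related problems but are not needed once one has the vanishing of $\mu$ for the unramified Iwasawa module, and they would not by themselves substitute for it.
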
 \noindent We apply the methods from $\cite{bib1}$ to prove our main results: we redefine the Pontryagin dual of a primary $\Lambda(A_{\mathfrak{p}})-$module. Since the Selmer group $\operatorname{Sel}_0^S(\phi[\mathfrak{p}^{\infty}]/\mathcal{F})$ is $\mathfrak{p}-$primary, we prove that the statement $(1)$ in $\textbf{Theorem }\ref{theorem 1.3}$ is equivalent to the finiteness of the $\mathfrak{p}-$torsion $\operatorname{Sel}_0^S(\phi[\mathfrak{p}^{\infty}]/\mathcal{F})[\mathfrak{p}]$ by studying the properties of Pontryagin duals (see \textbf{Theorem }\ref{theorem 4.6}). Furthermore, using $\emph{Snake Lemma}$, we prove that the finiteness of $\operatorname{Sel}_0^S(\phi[\mathfrak{p}^{\infty}]/\mathcal{F})[\mathfrak{p}]$ is equivalent to the finiteness of the residual fine Selmer group $\operatorname{Sel}_0^{S}(\phi[\mathfrak{p}]/\mathcal{F})$. \\[5mm]
To show the finiteness of the residual fine Selmer group $\operatorname{Sel}_0^{S}(\phi[\mathfrak{p}]/\mathcal{F})$, we need to provide a reformulation of Iwasawa theory of constant $\mathbb{Z}_p-$extension of global function fields: denote the maximal unramified abelian extension of $F_n$ in $F_n^{\operatorname{sep}}$ such that its degree over $F_n$ is a $p-$power and the places in $F_n$ above $S$ split completely by $H_p^{S}(F_n)$. We prove that
\begin{align*}
    X_S(\mathcal{F}) : = \varprojlim_n \operatorname{Gal}(H_p^S(F_n)/F_n)
\end{align*} is a finitely generated and torsion $\mathbb{Z}_p[\left[T\right]]-$module. Furthermore, using the theory of zeta function over global function fields (cf. \cite[\textbf{Chapter }11]{bib5}), we show that the $\mu-$invariant $X_S(\mathcal{F})$ vanishes and therefore $X_S(\mathcal{F})$ is pseudo-isomorphic to a free $\mathbb{Z}_p-$module of finite rank, which can be used to prove the finiteness of $\operatorname{Sel}_0^S(\phi[\mathfrak{p}]/\mathcal{F})$ (see \textbf{Proposition }\ref{proposition 5.5}).
\\[5mm] The statement (2) in $\textbf{Theorem }\ref{theorem 1.3}$ is an easy consequence of the statement $(1)$ and the statement $(3)$ is derived from some straightforward computations (see \textbf{Proposition }\ref{proposition 5.4}). 
\begin{center}
    \section{Theory of general Drinfeld modules}\label{sec2}
\end{center}
\justifying
In this section, we summarize the theory of Drinfeld modules.  We omit the proofs for some results and refer the readers to \cite[\textbf{Appendix A}]{bib2} for details. For the remainder of this section, the number $q$ is the power of a prime number $p$ and a global function field $F$ is always a finite extension of $\mathbb{F}_q(T)$ such that $\mathbb{F}_q$ is the full constant subfield of $F$. Furthermore, a place of a global function field $F$ refers to the maximal ideal of some valuation ring $\mathcal{O} \subsetneq F$ (cf. \cite[\textbf{1.1}]{bib8}). Denote the set of places of $F$ by $\Omega_F$ and the discrete valuation uniquely associated to a place $\mathfrak{p}$ in $\Omega_F$ by $v_{\mathfrak{p}}$. Given a fixed place $\infty$ of $F$, we consider the ring $A$ where
\begin{align} \label{A}
    A:= \{ x \in F : v_{\mathfrak{p}}(x) \geq 0, \forall \mathfrak{p} \in \Omega_F \setminus \{ \infty \} \}.
\end{align} and we consider the natural embedding $\gamma: A  \hookrightarrow F$. \\[5mm]
\begin{definition} Denote by $F\{\tau\}$ the ring of twisted polynomials (cf. \emph{\cite[\textbf{Definition 3.1.8}]{bib2}}). A Drinfeld module $\phi$ over $F$ is an $\mathbb{F}_q-$algebra homomorphism
\begin{align*}
    \phi: A & \longrightarrow F\{\tau\} \\
    a &\longmapsto \phi_a
\end{align*} such that $\phi(A) \nsubseteq F$ and $\partial\phi_{a} = \gamma(a)$ where $\partial$ is the formal derivative (cf. \emph{\cite[\textbf{Definition 4.4.1}]{bib13}}).
\end{definition} \noindent Our main interest with respect to a Drinfeld module $\phi$ over $F$ lies in studying its $\mathfrak{p}-$torsion where $\mathfrak{p}$ is a non-zero prime ideal of $A$ as in $(\ref{A})$. Given a non-zero ideal $I$ of $A$, there exists $\phi_I$ in $F\{ \tau \}$ such that the right ideal $F\{\tau\}\phi(I)$ is generated by $\phi_I$ (cf. \cite[\textbf{Corollary 3.1.15}]{bib2}). Furthermore, we fix an algebraic closure $\overline{F}$ of $F$.
\begin{definition} Let $I$ be a non-zero ideal of $A$. Then the $I-$torsion of $\phi$ is defined as
\begin{align*}
    \phi[I] := \{ x \in \overline{F} : \phi_I(x) = 0 \}.
\end{align*}
\end{definition}\noindent The following theorem is useful throughout this paper. Moreover, as an immediate consequence, the rank of a Drinfeld module $\phi$ over $F$ is always a positive integer.
\begin{theorem} Let $\phi$ be a Drinfeld mdule over $F$. Let $\mathfrak{p}$ be a non-zero prime ideal of $A$. Then for any $n$ in $\mathbb{N}_{>0}$, there is an isomorphism of $A-$modules
\begin{align*}
    \phi\left[\mathfrak{p}^n\right] \simeq \left(A/\mathfrak{p}^n\right)^{\oplus r}
\end{align*} where $r$ is the rank of $\phi$ \normalfont{(cf. \cite[\textbf{Definition A.6, Theorem A.12}]{bib2})}.
\end{theorem}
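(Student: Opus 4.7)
The plan is to reduce the theorem to a cardinality computation for $\phi[\mathfrak{p}^n]$ and then invoke the structure theorem for finitely generated modules over the discrete valuation ring $A_{\mathfrak{p}}$. First I would recall from the theory of twisted polynomials that, by definition of rank, $\deg_{\tau} \phi_a = r \cdot \deg a$ for any nonzero $a \in A$, where $\deg a := \dim_{\mathbb{F}_q}(A/(a))$, and extend this multiplicatively to $\deg_{\tau} \phi_I = r \cdot \dim_{\mathbb{F}_q}(A/I)$ for an arbitrary nonzero ideal $I$ of $A$; this is standard bookkeeping in $F\{\tau\}$ recorded in \cite[Appendix A]{bib2}.

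Next I would establish separability of $\phi_I$. Writing $\phi_I = \sum_j c_j \tau^j$ and viewing it as the polynomial $\phi_I(X) = \sum_j c_j X^{q^j}$ over $\overline{F}$, the formal $X$-derivative equals the constant term $c_0 = \partial \phi_I$. Since $\partial$ is an $\mathbb{F}_q$-algebra homomorphism and $\partial \phi_a = \gamma(a) \neq 0$ for any nonzero $a \in I$, the relation $\phi_a \in F\{\tau\}\phi_I$ forces $c_0 \neq 0$, so $\phi_I(X)$ is separable. Combined with the degree formula this yields
\[
\#\phi[I] \;=\; \deg_X \phi_I(X) \;=\; q^{\deg_{\tau} \phi_I} \;=\; q^{r \cdot \dim_{\mathbb{F}_q}(A/I)} \;=\; \#(A/I)^r.
\]

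To conclude, note that $\phi[\mathfrak{p}^n]$ is a finite $A$-module annihilated by $\mathfrak{p}^n$, hence a finite torsion $A_{\mathfrak{p}}$-module; by the structure theorem for such modules we obtain a decomposition $\phi[\mathfrak{p}^n] \simeq \bigoplus_{i=1}^k A/\mathfrak{p}^{e_i}$ with $1 \leq e_i \leq n$. Applying the cardinality formula to $I = \mathfrak{p}$ forces $k = r$, and applying it to $I = \mathfrak{p}^n$ forces $\sum_i e_i = rn$; combined with $e_i \leq n$ this gives $e_i = n$ for every $i$, yielding the desired isomorphism $\phi[\mathfrak{p}^n] \simeq (A/\mathfrak{p}^n)^{\oplus r}$.

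The main obstacle will be giving a clean justification of the degree formula $\deg_{\tau} \phi_I = r \cdot \dim_{\mathbb{F}_q}(A/I)$ for a general ideal rather than just a principal one; the standard argument fixes a nonzero $a \in I$ with $(a) \subseteq I$ and compares the $\tau$-degrees of $\phi_a$ and $\phi_I$ with the $\mathbb{F}_q$-codimensions of $(a)$ and $I$ via the exact sequence of left $F\{\tau\}$-modules $0 \to F\{\tau\}\phi(I)/F\{\tau\}\phi_a \to F\{\tau\}/F\{\tau\}\phi_a \to F\{\tau\}/F\{\tau\}\phi(I) \to 0$ together with the parallel $A$-module sequence $0 \to I/(a) \to A/(a) \to A/I \to 0$. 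Since this piece of the theory is routine but somewhat technical, I would quote it from \cite[Appendix A]{bib2} rather than reproduce the full computation.
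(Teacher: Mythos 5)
Your proof is correct. The paper gives no argument of its own for this statement --- its ``proof'' is a one-line citation to \cite[Theorem A.12(1)]{bib2} --- and what you have written is essentially the standard argument found in that reference: separability of $\phi_I(X)$ from the generic-characteristic hypothesis ($\partial\phi_a=\gamma(a)\neq 0$ with $\gamma$ injective forces $\partial\phi_I\neq 0$), the $\tau$-degree formula $\deg_\tau\phi_I = r\cdot\dim_{\mathbb{F}_q}(A/I)$, the resulting count $\#\phi[I]=\#\bigl((A/I)^{\oplus r}\bigr)$, and then the structure theorem for finite torsion $A_{\mathfrak{p}}$-modules together with the counts for $I=\mathfrak{p}$ and $I=\mathfrak{p}^n$ to pin down the elementary divisors. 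The one step you flag as needing care --- the degree formula for a nonprincipal ideal $I$ --- is indeed the only part not immediate from the definition of rank, and deferring it to the cited appendix is the right call given the paper itself does the same for the whole theorem.
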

\begin{proof} See \cite[\textbf{Theorem A.12 (1)}]{bib2}.
\end{proof} \noindent For each $n$ in $\mathbb{N}_{>0}$ and a non-zero prime ideal $\mathfrak{p}$ of $A$, there is an isomorphism of rings
\begin{align*}
    A/\mathfrak{p}^nA \simeq A_{\mathfrak{p}}/\mathfrak{p}^nA_{\mathfrak{p}}
\end{align*} where $ A_{\mathfrak{p}}$ is the completion of $A$ with respect to $\mathfrak{p}$ and so it is a discrete valuation ring. For a fixed uniformizer $\pi$ of $A_{\mathfrak{p}}$, if we consider the injection
\begin{align} \label{map1}
   \iota_n: A_{\mathfrak{p}}/\mathfrak{p}^nA_{\mathfrak{p}} & \longrightarrow A_{\mathfrak{p}}/\mathfrak{p}^{n+1}A_{\mathfrak{p}},\\
    [a] & \longmapsto [\pi a] \label{map2}
\end{align} for each $n$, then we may define
\begin{align*}
    \phi[\mathfrak{p}^{\infty}] := \varinjlim_n \phi[\mathfrak{p}^n]
\end{align*} where each map inside the direct system is the natural extension of $\iota_n$ defined in ($\ref{map1}$), ($\ref{map2}$) to the corresponding $r-$powers. Furthermore, since each $\iota_n$ is an injection, we may identify 
\begin{align*}
    \phi[\mathfrak{p}^{\infty}]  = \bigcup_{n\geq 1}  \phi[\mathfrak{p}^n].
\end{align*} Moreover, we have an isomorphism of $A-$modules
\begin{align} \label{rankr isomorphism}
    \phi[\mathfrak{p}^{\infty}] \simeq \left(F_{\mathfrak{p}}/A_\mathfrak{p}\right)^{\oplus r}
\end{align} where $F_{\mathfrak{p}}$ is the field of fractions of $A_{\mathfrak{p}}$. This isomorphism can be established in the same way as $\mathbb{Q}_p/\mathbb{Z}_p \simeq \varinjlim_n \mathbb{Z}/p^n\mathbb{Z}$ and we leave this to the readers to verify. Finally, since there exists $\phi_{\mathfrak{p}}$ in $F\{ \tau \}$ such that
\begin{align*}
    \phi[\mathfrak{p}] = \{ \alpha \in \overline{F} : \phi_{\mathfrak{p}}(\alpha) = 0 \},
\end{align*} we consider the surjective maps
\begin{align*}
    \forall n \in \mathbb{N},\quad\phi_{\pi}^{n+1, n}: \phi[\mathfrak{p}^{n+1}] &\longrightarrow \phi[\mathfrak{p}^{n}] \\
    \alpha&\longmapsto \phi_{\mathfrak{p}}(\alpha)
\end{align*} with $\operatorname{ker}(\phi_{\pi}^{n+1, n}) = \phi[\mathfrak{p}]$ for each $n$. Therefore, we have the following short exact sequence of $A_{\mathfrak{p}}-$modules by taking the direct limit
\begin{align} \label{galoiscohomology}
    0 \rightarrow \phi[\mathfrak{p}] \rightarrow  \phi[\mathfrak{p}^{\infty}] \rightarrow \phi[\mathfrak{p}^{\infty}] \rightarrow 0.
\end{align} 
 Given a Drinfeld module $\phi$ over $F$ and a fixed non-zero prime ideal $\mathfrak{p}$ of $A$, we ease the notation by denoting the place uniquely associated to $\mathfrak{p}$ also by $\mathfrak{p}$. We need to show that the set $S$ as in $(\ref{S})$ is finite, and it suffices to show that any Drinfeld module $\phi$ over $F$ has only finitely many bad reductions. 
\begin{definition} Let $\phi$ be a Drinfeld module over $F$ and  $\mathfrak{p}$ be some non-zero prime ideal of $A$. We say $\phi$ has a stable reduction at $\mathfrak{p}$ if $\phi$ is isomorphic to some Drinfeld module $\psi$
\begin{align*}
    \psi: A \longrightarrow A_{\mathfrak{p}} \{ \tau \}
\end{align*} such that 
\begin{align*}
    \overline{\psi}: A \xrightarrow{\psi} A_{\mathfrak{p}}\{\tau\} \twoheadrightarrow\mathbb{F}_{\mathfrak{p}}\{ \tau \}
\end{align*} is a Drinfeld module over $\mathbb{F}_{\mathfrak{p}}$ where $\mathbb{F}_{\mathfrak{p}} = A_{\mathfrak{p}}/\mathfrak{p}A_{\mathfrak{p}}$.
Furthermore, we say $\phi$ has a good reduction at $\mathfrak{p}$ if it has a stable reduction at $\mathfrak{p}$ and the rank of the induced Drinfeld module $\overline{\psi}$ over $\mathbb{F}_{\mathfrak{p}}$ is equal to the rank of the Drinfeld module $\phi$ over $F$. We say $\mathfrak{p}$ is a bad reduction of $\phi$ if $\phi$ does not have a good reduction at $\mathfrak{p}$.
\end{definition}
\begin{remark} Let $\phi$ be a Drinfeld module over $F$. Let $\mathfrak{X} = \{ T_i \}_{i=1}^n$ be a finite subset of $A$ generating $A$ as an $\mathbb{F}_q-$algebra. We may assume without loss of generality that the Drinfeld module $\phi$ such that 
\begin{align*}
   \forall T_i \in \mathfrak{X},\quad \phi_{T_i} = T_i \tau^0 + a_{i1} \tau^1 + \cdots + a_{ir_i}\tau^{r_i}
\end{align*} and all its coefficients are in $A$. Therefore, we see that $\phi$ has a bad reduction at a non-zero prime ideal $\mathfrak{p}$ if and only if $a_{ir_i}$ is contained in $\mathfrak{p}$ for any $1\leq i \leq n$. Hence, the set of bad reductions of $\phi$ is finite.
\end{remark}
\begin{center}
    \section{Iwasawa theory of constant $\mathbb{Z}_p-$extension}
\end{center}
\justifying
 In this section, we study the Iwasawa theory of constant $\mathbb{Z}_p-$extension of global function fields. 
Given $F$ a global function field over $\mathbb{F}_q$, let us take a tower of extensions
\begin{align} \label{constant extension}
    F = F_0 \subseteq \cdots \subseteq F_n \subseteq F_{n+1} \subseteq \cdots \subseteq \mathcal{F}
\end{align} where each $F_n$ is defined as the compositum $F\mathbb{F}_{q^{p^n}}$. In other words, each $F_n$ is a constant extension of $F$ such that $[F_n:F] = p^n$. We call $\mathcal{F}$ in $(\ref{constant extension})$ the constant $\mathbb{Z}_p-$extension of $F$. For a fixed $n$ and some fixed place $\mathfrak{p}$ in $F$, we denote the unique valuation associated to a place $\mathfrak{P}_n$ of $F_n$ above $\mathfrak{p}$ by $v_{{\mathfrak{P}_n}}$ and we consider the following valuation ring with respect to $\mathfrak{P}_n$ which contains $\mathfrak{P}_n$ as its unique maximal ideal
\begin{align*}
     \mathcal{O}_{\mathfrak{P}_n} : = \{ x \in F_n: v_{\mathfrak{P}_n}(x) \geq 0 \}.
\end{align*}
\begin{definition} Let $F$ be a global function field over $\mathbb{F}_q$. For each $F_n$ in $(\ref{constant extension})$, the degree of a place $\mathfrak{P}_n$ of $F_n$ is defined to be
\begin{align*}
    \operatorname{deg } \mathfrak{P}_n : = [\mathcal{O}_{\mathfrak{P}_n}/\mathfrak{P}_n : \mathbb{F}_{q^{p^n}}].
\end{align*}
\end{definition}
\begin{proposition} \label{proposition 3.2} Let $S$ be a finite subset of places of $F$. Let $H_p^{S}(F_n)$ be the maximal unramified abelian extension of $F_n$ in $F_n^{\operatorname{sep}}$ such that its degree over $F_n$ is a $p-$power and the places of $F_n$ above $S$ split completely. Then for sufficiently large $n$, we have
\begin{align*}
    \forall m > n,\quad H_p^{S}(F_n) \cap F_m = F_n.
\end{align*}
\end{proposition}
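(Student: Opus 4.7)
The plan is to analyze the splitting behaviour of places of $F_n$ above $S$ in the constant tower and show that for sufficiently large $n$, at least one place above $S$ fails to split completely in every nontrivial step $F_m/F_n$, which forces $H_p^S(F_n) \cap F_m = F_n$.

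First I would note that $F_m/F_n$ is a cyclic $p$-extension with $\operatorname{Gal}(F_m/F_n) \simeq \mathbb{Z}/p^{m-n}\mathbb{Z}$, so every field between $F_n$ and $F_m$ is of the form $F_{n+k}$ for some $0 \le k \le m-n$. Since $H_p^S(F_n) \cap F_m$ is one of these intermediate fields, it suffices to rule out $F_{n+1} \subseteq H_p^S(F_n)$; equivalently, to exhibit one place of $F_n$ above $S$ that does not split completely in $F_{n+1}/F_n$.

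Second, I would invoke the standard fact about splitting in constant extensions of a function field: a place $\mathfrak{P}$ of $F_n$ of degree $d$ (over $\mathbb{F}_{q^{p^n}}$) decomposes in $F_m=F_n\mathbb{F}_{q^{p^m}}$ via the compositum of residue fields $\mathbb{F}_{q^{p^n d}}\cdot\mathbb{F}_{q^{p^m}}=\mathbb{F}_{q^{\operatorname{lcm}(p^n d,\, p^m)}}$, so $\mathfrak{P}$ splits completely in $F_m/F_n$ if and only if $p^{m-n}\mid d$. In particular $\mathfrak{P}$ splits completely in $F_{n+1}/F_n$ if and only if $p\mid d$.

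Third, I would compute the degrees of places of $F_n$ above a place $v\in S$. Writing $\deg(v)=p^a d'$ with $\gcd(d',p)=1$, the residue field of any place of $F_n$ over $v$ is $\mathbb{F}_{q^{\deg(v)}}\cdot\mathbb{F}_{q^{p^n}}=\mathbb{F}_{q^{\operatorname{lcm}(p^a d',\, p^n)}}$, so its degree over $\mathbb{F}_{q^{p^n}}$ equals $p^{\max(a-n,\,0)}d'$. For $n\ge a$ this degree is $d'$, which is coprime to $p$.

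Finally, since $S$ is finite I would choose $n_0$ strictly larger than $v_p(\deg v)$ for every $v\in S$. Then for $n\ge n_0$ and every $m>n$, every place of $F_n$ lying above $S$ has degree coprime to $p$, so by the criterion above none of them splits completely in $F_{n+1}/F_n$; hence $F_{n+1}\not\subseteq H_p^S(F_n)$, and the intermediate-field observation from the first step gives $H_p^S(F_n)\cap F_m=F_n$. The only real obstacle is the clean bookkeeping of the residue-field compositum that produces the degree formula $p^{\max(a-n,0)}d'$; everything else is essentially a Galois-theoretic reduction to that single computation.
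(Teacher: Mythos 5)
Your proof is correct, and it takes a genuinely different route from the paper's. The paper invokes a class-field-theoretic input (Rosen's theorem, cited as \cite[\textbf{Theorem 1.3}]{bib14}) to pin down the full constant subfield of the auxiliary field $H^S(F_n)$ as $\mathbb{F}_{q^{\delta_n p^n}}$, then shows the intersection of constant subfields of $H^S(F_n)$ and $F_m$ is $\mathbb{F}_{q^{p^n}}$ once the $\gcd$ $\delta_n$ stabilizes to a $p$-coprime value, and finally sandwiches $H_p^S(F_n) \cap F_m$ inside $H^S(F_n) \cap F_m$. You instead observe that $H_p^S(F_n) \cap F_m$ is an intermediate field of the cyclic $p$-extension $F_m/F_n$, hence of the form $F_{n+k}$, so the whole problem reduces to showing $F_{n+1} \not\subseteq H_p^S(F_n)$, i.e.\ to exhibiting one place above $S$ that fails to split completely in a single step of the constant tower; the residue-field-compositum computation giving $\deg \mathfrak{P}_n = p^{\max(a-n,0)}d'$ and the splitting criterion $p \mid \deg \mathfrak{P}_n$ then finish it. Both arguments bottom out in the same arithmetic fact that for $n$ large the degrees of places of $F_n$ above $S$ become coprime to $p$ (your formula is equivalent to the one the paper cites from \cite[\textbf{Lemma 5.1.9(d)}]{bib8}), but yours bypasses the description of $H^S(F_n)$'s constant field entirely and replaces it with a direct, purely Galois-theoretic reduction; it is the more elementary and self-contained of the two.
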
    
\begin{proof} For each $n \geq 0$, we denote the set of places of $F_n$ above $S$ by $S(F_n)$. Furthermore, we denote the maximal unramified abelian extension of $F_n$ in $F_n^{\operatorname{sep}}$ such that the places of $F_n$ above $S$ split completely by $H^S(F_n)$.  As a direct result of $\cite[\textbf{Theorem 1.3}]{bib14}$, the full constant subfield of $H^S(F_n)$ is $\mathbb{F}_{q^{\delta_n p^n}}$ where
\begin{align} \label{delta}
    \delta_n = \operatorname{gcd}_{\mathfrak{P} \in S(F_n)}\{ \operatorname{deg} \mathfrak{P} \}.
\end{align} \textbf{Claim:} For all $n \geq 0$, we have $\delta_{n+1} \leq \delta_n$ and there exists some $N \in \mathbb{N}$ such that $\delta_m = \delta_N$ for all $m \geq N$. Furthermore, we have that $p \nmid \delta_N$. \\[5mm]
\textbf{Proof of claim:} The first part of the claim is a direct consequence of $\cite[\textbf{Theorem 3.6.3 (c)}]{bib8}$. For any place $\mathfrak{P}_n$ of $F_n$ above some place $\mathfrak{p}$ of $F$, the following equality holds
\begin{align} \label{degp}
    \operatorname{deg} \mathfrak{P}_n = \frac{\operatorname{deg} \mathfrak{p}}{\operatorname{gcd}(\operatorname{deg} \mathfrak{p}, p^n)}
\end{align} as a result of $\cite[\textbf{Lemma 5.1.9(d)}]{bib8}$. Therefore, for sufficiently large $n$, we derive that
\begin{align*}
    \operatorname{gcd}(\operatorname{deg} \mathfrak{P}_n, p^n) = p^{v_p(\operatorname{deg} \mathfrak{P}_n)}.
\end{align*} Hence, we must have that $p \nmid \operatorname{deg} \mathfrak{P}_n$ $(\ref{degp})$ and therefore, the prime number $p$ does not divide $\delta_n$ $(\ref{delta})$ for sufficiently large $n$, which equals to $\delta_N$ for some fixed $N \in \mathbb{N}$ by the first part of the claim. \hspace{25mm} \checkmark
\\[5mm]
Now for sufficiently large $n$ and $m > n$, we have that the intersection of the full constant subfields of $H^S(F_n)$ and $F_m$ is
\begin{align*}
  \mathbb{F}_{q^{\delta_N p^n}} \cap \mathbb{F}_{q^{p^m}} = \mathbb{F}_{q^{\operatorname{gcd}(\delta_N p^n, p^m)}} = \mathbb{F}_{q^{p^n}}
\end{align*} where the last equality is a direct result of the second part of the claim. 
We observe that for sufficiently large $n$ and $m > n$, the extension $H^S(F_n) \cap F_m$ is a subfield of $F_m$ with $\mathbb{F}_{q^{p^n}}$ as its full constant subfield and so we must have
\begin{align*}
   \forall n \gg 0, m > n, \quad H^S(F_n) \cap F_m = F_n.
\end{align*} Lastly, we derive that
\begin{align*}
    \forall n \gg 0, m>n, \quad F_n \subseteq H_p^S(F_n) \cap F_m \subseteq H^S(F_n) \cap F_m = F_n,
\end{align*} which yields our desired result.
\end{proof}\noindent  As a direct result of \textbf{Proposition }\ref{proposition 3.2}, the following equalities hold
\begin{align} \label{fact}
 \forall n \gg 0,\quad    H_p^S(F_n) \cap \mathcal{F} = F_n.
\end{align}  If we set
\begin{align} \label{X_n}
 \forall n \geq 0, \quad X_n := \operatorname{Gal}(H_p^{S}(F_n)/F_n),
\end{align}
we compute that
\begin{align*}
    \varprojlim_n X_n &= \varprojlim \operatorname{Gal}(H_p^{S}(F_n)/F_n)\\
       & \stackrel{(\ref{fact})} = \varprojlim_n \text{Gal}(H_p^S(F_n) /H_p^S(F_n) \cap \mathcal{F}) \\
    & \simeq \varprojlim_n \text{Gal}(H_p^S(F_n) \cdot \mathcal{F} / \mathcal{F}) \\
    & \simeq \text{Gal}(H_p^S(\mathcal{F})/ \mathcal{F}).
\end{align*} We denote the inverse limit above by $X_S(\mathcal{F})$ and denote the Iwasawa algebra $\mathbb{Z}_p\left[\left[\operatorname{Gal}(\mathcal{F}/F)\right]\right]$ by $\Lambda(\mathbb{Z}_p)$. The inverse limit $X_S(\mathcal{F})$ is a module over $\Lambda(\mathbb{Z}_p)$ induced by the action
\begin{align*}
    \forall n \geq 0, \quad \operatorname{Gal}(F_n/F) \times \operatorname{Gal}(H_p^S(F_n)/F_n) & \longrightarrow \operatorname{Gal}(H_p^S(F_n)/F_n), \\
    (\gamma, x) & \longmapsto \Tilde{\gamma} x \Tilde{\gamma}^{-1}.
\end{align*} where $\Tilde{\gamma}$ is an extension of $\gamma$ from $\operatorname{Gal}(H_p^{S}(F_n)/F)$. Since each $\operatorname{Gal}(H_p^S(F_n)/F_n)$ is abelian, this action is well-defined (cf. \cite[p.278]{bib3}). Furthermore, we will see that the usual statements of Iwasawa theory of number fields hold for $X_S(\mathcal{F})$. 
\begin{lemma} \label{lemma 3.3} Let $\mathfrak{p}$ be a place of $F$. Let $D_{\mathfrak{p}}(\mathcal{F}/F)$ be the decomposition group of $\mathfrak{p}$ with respect to the constant $\mathbb{Z}_p-$extension $\mathcal{F}$ of $F$. Then $D_{\mathfrak{p}}(\mathcal{F}/F)$ is non-trivial.
\end{lemma}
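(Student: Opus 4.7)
The plan is to exploit the fact that $\mathcal{F}/F$ is a constant extension, hence everywhere unramified, so the decomposition group at $\mathfrak{p}$ is precisely the closed subgroup generated by the Frobenius element at $\mathfrak{p}$. Under the canonical isomorphism $\operatorname{Gal}(\mathcal{F}/F) \simeq \operatorname{Gal}(\mathbb{F}_{q^{p^\infty}}/\mathbb{F}_q) \simeq \mathbb{Z}_p$ (the pro-$p$ part of the absolute Galois group of $\mathbb{F}_q$), the Frobenius at $\mathfrak{p}$ corresponds to the $q^{\deg \mathfrak{p}}$-power map, which as an element of $\mathbb{Z}_p$ is just $\deg \mathfrak{p}$ times a topological generator. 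Since $\deg \mathfrak{p}$ is a positive integer, its image in $\mathbb{Z}_p$ is nonzero, so the closed subgroup it generates is $p^{v_p(\deg \mathfrak{p})} \mathbb{Z}_p$, a nontrivial (indeed open) subgroup of $\operatorname{Gal}(\mathcal{F}/F)$.

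Concretely, I would proceed as follows. First I would record that constant extensions are unramified, which can be invoked from \cite[\textbf{Theorem 3.6.3}]{bib8} or the standard theory of function fields; this reduces the decomposition group to the closure of the cyclic group generated by the Frobenius. Second I would fix a place $\mathfrak{P}$ of $\mathcal{F}$ above $\mathfrak{p}$ and, for each finite layer $F_n$, compute the order of the decomposition group $D_{\mathfrak{p}}(F_n/F)$ using the formula $ef = p^n/g$ where $g$ is the number of primes of $F_n$ above $\mathfrak{p}$. Combined with the identity
\begin{align*}
    \operatorname{deg} \mathfrak{P}_n = \frac{\operatorname{deg} \mathfrak{p}}{\operatorname{gcd}(\operatorname{deg} \mathfrak{p}, p^n)}
\end{align*}
from equation $(\ref{degp})$ in the proof of \textbf{Proposition }\ref{proposition 3.2}, one finds that the residue degree is $f(\mathfrak{P}_n/\mathfrak{p}) = p^n/\operatorname{gcd}(\operatorname{deg} \mathfrak{p}, p^n)$, and thus $|D_{\mathfrak{p}}(F_n/F)| = p^n/\operatorname{gcd}(\operatorname{deg} \mathfrak{p}, p^n)$.

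Third, passing to the inverse limit and using that $\operatorname{gcd}(\operatorname{deg} \mathfrak{p}, p^n)$ stabilizes at $p^{v_p(\operatorname{deg} \mathfrak{p})}$ for large $n$, the orders $|D_{\mathfrak{p}}(F_n/F)|$ grow without bound, so $D_{\mathfrak{p}}(\mathcal{F}/F)$ is infinite, hence certainly non-trivial. In fact, the above identifies $D_{\mathfrak{p}}(\mathcal{F}/F)$ with the open subgroup $p^{v_p(\operatorname{deg}\mathfrak{p})}\mathbb{Z}_p$ of $\operatorname{Gal}(\mathcal{F}/F) \simeq \mathbb{Z}_p$, a sharper statement that may be useful later.

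There is no real obstacle here; the only subtlety is making sure one correctly identifies the Frobenius at $\mathfrak{p}$ inside $\operatorname{Gal}(\mathcal{F}/F)$ with the integer $\operatorname{deg} \mathfrak{p}$ viewed in $\mathbb{Z}_p$, which is immediate once one recalls that the Frobenius on the residue field $\mathbb{F}_{q^{\operatorname{deg}\mathfrak{p}}}$ lifts uniquely to the geometric Frobenius on $\overline{\mathbb{F}_q}$ and that the constant $\mathbb{Z}_p$-extension is by definition the pro-$p$ part of the maximal constant extension.
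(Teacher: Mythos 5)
Your proof is correct but takes a more explicit and computational route than the paper. Both arguments start from the same place: since constant extensions are everywhere unramified, $D_{\mathfrak{p}}(\mathcal{F}/F)$ is governed entirely by the residue field extension at a place $\mathfrak{P}$ of $\mathcal{F}$ above $\mathfrak{p}$. The paper then stops with a soft cardinality argument: $D_{\mathfrak{p}}(\mathcal{F}/F) \simeq \operatorname{Gal}(\mathcal{F}_{\mathfrak{P}}/F_{\mathfrak{p}})$, and this Galois group is realized as the kernel of the restriction map $\operatorname{Gal}(\mathcal{F}_{\mathfrak{P}}/\mathbb{F}_q) \to \operatorname{Gal}(F_{\mathfrak{p}}/\mathbb{F}_q)$, which must be infinite because the source is infinite (the residue field contains the constant field $\mathbb{F}_{q^{p^\infty}}$) and the target is finite. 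You instead identify the Frobenius at $\mathfrak{p}$ explicitly as the element $\deg \mathfrak{p}$ under $\operatorname{Gal}(\mathcal{F}/F) \simeq \mathbb{Z}_p$, and observe that a nonzero element of $\mathbb{Z}_p$ generates a nontrivial (indeed open) closed subgroup; your finite-level check $\lvert D_{\mathfrak{p}}(F_n/F)\rvert = p^n/\gcd(\deg\mathfrak{p}, p^n)$ confirms this. Your version is sharper — it pins down $D_{\mathfrak{p}}(\mathcal{F}/F) = p^{v_p(\deg \mathfrak{p})}\mathbb{Z}_p$ exactly, whereas the paper only extracts nontriviality — though for the lemma as stated the paper's softer argument suffices, and the paper recovers the form $p^n\mathbb{Z}_p$ in Proposition \ref{proposition 3.4} anyway from the general structure of closed subgroups of $\mathbb{Z}_p$.
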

\begin{proof} Given a place $\mathfrak{p}$ of $F$, we fix a place $\mathfrak{P}$ of $\mathcal{F}$ above $\mathfrak{p}$. Since the constant extension is abelian and unramified, the decomposition group is independent of the choice of the place $\mathfrak{P}$ above $\mathfrak{p}$ and there is an isomorphism 
\begin{align*}
    D_{\mathfrak{p}}(\mathcal{F}/F) \simeq \operatorname{Gal}(\mathcal{F}_\mathfrak{P}/F_{\mathfrak{p}})
\end{align*} where $\mathcal{F}_{\mathfrak{P}}$ and $F_{\mathfrak{p}}$ are the residue fields of $\mathcal{F}$ at $\mathfrak{P}$ and $F$ at $\mathfrak{p}$, respectively  (cf. \cite[\textbf{Definition 1.1.14}]{bib8}, \cite[\textbf{Theorem 3.8.2(c)}]{bib8}). However, the Galois group $\operatorname{Gal}(\mathcal{F}_\mathfrak{P}/F_{\mathfrak{p}})$ is the kernel of the following restriction map
\begin{align*}
   \varphi: \operatorname{Gal}(\mathcal{F}_{\mathfrak{P}}/\mathbb{F}_q) \longrightarrow \operatorname{Gal}(F_\mathfrak{p}/\mathbb{F}_q)
\end{align*} where the domain is infinite and the codomain is finite. Therefore, the kernel of $\varphi$ is infinite and so the decomposition group $D_{\mathfrak{p}}(\mathcal{F}/F)$ cannot be trivial. 
\end{proof}
\begin{proposition} \label{proposition 3.4}  Let $S$ be a finite subset of places of $F$. Then there exists some $n \geq 0$ such that every place in $S$ is totally inert in the extension $\mathcal{F}/F_n$.
\end{proposition}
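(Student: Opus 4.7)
The plan is to deduce the result from \textbf{Lemma }\ref{lemma 3.3} combined with the classification of closed subgroups of $\mathbb{Z}_p$. I recall that $\operatorname{Gal}(\mathcal{F}/F) \simeq \mathbb{Z}_p$, whose only closed subgroups are $\{0\}$ and the open subgroups $p^{k}\mathbb{Z}_p$ for $k \geq 0$; under the Galois correspondence these correspond to the fixed fields $\mathcal{F}$ and $F_k$ respectively.

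For each $\mathfrak{p} \in S$, \textbf{Lemma }\ref{lemma 3.3} gives that $D_{\mathfrak{p}}(\mathcal{F}/F)$ is a non-trivial closed subgroup of $\operatorname{Gal}(\mathcal{F}/F)$. By the previous paragraph, there exists a unique integer $n_{\mathfrak{p}} \geq 0$ such that $D_{\mathfrak{p}}(\mathcal{F}/F) = \operatorname{Gal}(\mathcal{F}/F_{n_{\mathfrak{p}}})$. Since $S$ is finite, I would then set $n := \max_{\mathfrak{p} \in S} n_{\mathfrak{p}}$, and claim that this $n$ does the job.

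To verify this, I would fix $\mathfrak{p} \in S$, a place $\mathfrak{P}_n$ of $F_n$ above $\mathfrak{p}$, and a place $\mathfrak{P}$ of $\mathcal{F}$ above $\mathfrak{P}_n$. Because the extension $\mathcal{F}/F$ is abelian, the decomposition group in the sub-extension satisfies
\begin{align*}
    D_{\mathfrak{P}_n}(\mathcal{F}/F_n) = D_{\mathfrak{P}}(\mathcal{F}/F) \cap \operatorname{Gal}(\mathcal{F}/F_n) = \operatorname{Gal}(\mathcal{F}/F_{n_{\mathfrak{p}}}) \cap \operatorname{Gal}(\mathcal{F}/F_n) = \operatorname{Gal}(\mathcal{F}/F_n),
\end{align*}
where the last equality uses $n \geq n_{\mathfrak{p}}$, i.e.\ $F_{n_{\mathfrak{p}}} \subseteq F_n$. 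Since $\mathcal{F}/F_n$ is unramified and the decomposition group coincides with the full Galois group, the place $\mathfrak{P}_n$ is totally inert in $\mathcal{F}/F_n$, as desired.

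There is no serious obstacle in this argument: the proposition is essentially a repackaging of the non-triviality of the local decomposition groups supplied by \textbf{Lemma }\ref{lemma 3.3} via the structure theory of closed subgroups of $\mathbb{Z}_p$. The only small point requiring attention is the compatibility of decomposition groups through the tower $F \subseteq F_n \subseteq \mathcal{F}$, which is immediate from the fact that $\operatorname{Gal}(\mathcal{F}/F)$ is abelian.
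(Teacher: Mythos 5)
Your proof is correct and follows essentially the same route as the paper: both invoke \textbf{Lemma }\ref{lemma 3.3} to get non-triviality of each $D_{\mathfrak{p}}(\mathcal{F}/F)$, use the classification of closed subgroups of $\mathbb{Z}_p$ to identify each with some $\operatorname{Gal}(\mathcal{F}/F_{n_\mathfrak{p}})$, and conclude via $D_{\mathfrak{p}}(\mathcal{F}/F_n) = D_{\mathfrak{p}}(\mathcal{F}/F) \cap \operatorname{Gal}(\mathcal{F}/F_n)$. The paper phrases the choice of $n$ as $\bigcap_{\mathfrak{p}\in S} D_{\mathfrak{p}}(\mathcal{F}/F) = p^n\mathbb{Z}_p$ while you take $n = \max_{\mathfrak{p}} n_{\mathfrak{p}}$; these are the same thing.
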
  
\begin{proof} The proof is similar to $\cite[\textbf{Lemma 13.3}]{bib3}$ but we repeat it here. By $\textbf{Lemma }\ref{lemma 3.3}$, the decomposition group $D_{\mathfrak{p}}(\mathcal{F}/F)$ of any place $\mathfrak{p}$ is a non-trivial subgroup of $\operatorname{Gal}(\mathcal{F}/F)$ which is $\mathbb{Z}_p$ by construction. It follows that
\begin{align*}
   \exists n \geq 0, \quad  \bigcap_{\mathfrak{p} \in S} D_{\mathfrak{p}}(\mathcal{F}/F) = p^n\mathbb{Z}_p.
\end{align*} Therefore, we have that
\begin{align*}
    \operatorname{Gal}(\mathcal{F}/F_n) =  \bigcap_{\mathfrak{p} \in S} D_{\mathfrak{p}}(\mathcal{F}/F),
\end{align*} which implies that
\begin{align*}
    \forall \mathfrak{p} \in S, \quad \operatorname{Gal}(\mathcal{F}/F_n) \subseteq D_{\mathfrak{p}}(\mathcal{F}/F).
\end{align*} We further derive that
\begin{align*}
    \forall \mathfrak{p} \in S,\quad D_{\mathfrak{p}}(\mathcal{F}/F_n) = \operatorname{Gal}(\mathcal{F}/F_n) \cap D_{\mathfrak{p}}(\mathcal{F}/F) = \operatorname{Gal}(\mathcal{F}/F_n),
\end{align*}which means $F/F_n$ is totally inert for all $\mathfrak{p}$ in $S$.
\end{proof}
\noindent We replace the field $F_n$ in $\textbf{Proposition }\ref{proposition 3.4}$ with $F$ so the place $\mathfrak{p}$ is totally inert with respect to the extension $\mathcal{F}/F$ for each place $\mathfrak{p}$ in $S$. Denote $\operatorname{Gal}(H_p^{S}(\mathcal{F})/F)$ by $G$ and denote the decomposition group with respect to the extension $H_p^{S}(\mathcal{F})/F$ of $\mathfrak{p}$ by $D_{\mathfrak{p}}$. Since  any place $\mathfrak{P}$ in $\mathcal{F}$ above arbitrary place $\mathfrak{p}$ in $S$ splits completely in $H_p^S(\mathcal{F})$, we have that
\begin{align*}
    \forall \mathfrak{p} \in S,\quad D_{\mathfrak{p}} \cap X_S(\mathcal{F}) = 1.
\end{align*} Furthermore, since $\mathcal{F}/F$ is totally inert at any $\mathfrak{p}$ in $S$, we have
\begin{align*}
    \forall \mathfrak{p} \in S,\quad D_{\mathfrak{p}} \hookrightarrow G/X_S(\mathcal{F}) = \operatorname{Gal}(\mathcal{F}/F)
\end{align*} is surjective and hence bijective. Furthermore, we obtain that
\begin{align} \label{G}
    \forall \mathfrak{p} \in S,\quad G = D_{\mathfrak{p}}X_S(\mathcal{F}) = X_S(\mathcal{F})D_{\mathfrak{p}}.
\end{align} Moreover, we fix a place $\infty$ in $S$ and we identify $\operatorname{Gal}(\mathcal{F}/F)$ with $D_{\infty}$. Since
\begin{align*}
    \forall \mathfrak{p} \in S,\quad D_{\mathfrak{p}} \subseteq X_S(\mathcal{F})D_{\infty}
\end{align*} we have 
\begin{align*}
  \forall \mathfrak{p} \in S,\quad  \sigma_{\mathfrak{p}} = a_{\mathfrak{p}} \sigma_{\infty}
\end{align*} where $\sigma_{\mathfrak{p}}$ is a topological generator of $D_{\mathfrak{p}}$ (cf. \cite[p.279]{bib3}). Upon identification of $D_{\infty}$ and $\operatorname{Gal}(\mathcal{F}/F)$, there is a natural action of $\operatorname{Gal}(\mathcal{F}/F)$ on $X_S(\mathcal{F})$
\begin{align} 
    \operatorname{Gal}(\mathcal{F}/F) \times X_S(\mathcal{F}) &\longrightarrow X_S(\mathcal{F}), \label{Z_p structure}\\
    (g, x) & \longmapsto gxg^{-1}.\label{Z_p structure1}
\end{align} We denote this action by $x^g$ for $g$ in $\operatorname{Gal}(\mathcal{F}/F)$ acting on $x$ in $X_S(\mathcal{F})$.
\begin{lemma} With the notations and the replacement above, let $G^{\prime}$ be the closure of the commutator subgroup of $G$. Then we have
\begin{align*}
    G^{\prime} = X_S(\mathcal{F})^{\sigma_{\infty} - 1}.
\end{align*}
\end{lemma}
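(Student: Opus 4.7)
The plan is to exploit the semidirect-type decomposition $G = X_S(\mathcal{F}) D_\infty$ given in $(\ref{G})$, together with the fact that both factors are abelian: $X_S(\mathcal{F}) = \operatorname{Gal}(H_p^S(\mathcal{F})/\mathcal{F})$ is abelian by construction of $H_p^S(\mathcal{F})$, and $D_\infty \simeq \operatorname{Gal}(\mathcal{F}/F) \simeq \mathbb{Z}_p$ is procyclic, generated topologically by $\sigma_\infty$. In particular $G/X_S(\mathcal{F})$ is abelian, so every commutator already lies in $X_S(\mathcal{F})$, and the inclusion $G' \subseteq X_S(\mathcal{F})$ is automatic. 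The claim is then really about the subgroup of $X_S(\mathcal{F})$ generated by all commutators.

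For the inclusion $X_S(\mathcal{F})^{\sigma_\infty - 1} \subseteq G'$, I would observe directly from the definition of the action in $(\ref{Z_p structure})$--$(\ref{Z_p structure1})$ that for every $x \in X_S(\mathcal{F})$,
\begin{align*}
    x^{\sigma_\infty - 1} \;=\; (\sigma_\infty x \sigma_\infty^{-1}) x^{-1} \;=\; [\sigma_\infty, x] \;\in\; G',
\end{align*}
so the image of $\sigma_\infty - 1$ lies inside the commutator subgroup, and also inside its closure $G'$.

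For the reverse inclusion, I would take two arbitrary elements of $G$, write them as $g_i = x_i \sigma_\infty^{a_i}$ with $x_i \in X_S(\mathcal{F})$ and $a_i \in \mathbb{Z}_p$, and expand the commutator $[g_1, g_2]$ directly. Using the abelianness of $X_S(\mathcal{F})$ to reorder the factors, a short calculation gives (writing the $X_S(\mathcal{F})$-action multiplicatively)
\begin{align*}
    [g_1, g_2] \;=\; x_1^{1 - \sigma_\infty^{a_2}}\, x_2^{\sigma_\infty^{a_1} - 1}.
\end{align*}
Then I would invoke the identity $\sigma_\infty^a - 1 = (\sigma_\infty - 1)(\sigma_\infty^{a-1} + \cdots + 1)$ in $\mathbb{Z}_p[[\operatorname{Gal}(\mathcal{F}/F)]]$, which extends to all $a \in \mathbb{Z}_p$ by continuity, to conclude that each factor on the right lies in $X_S(\mathcal{F})^{\sigma_\infty - 1}$. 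Hence every commutator sits in $X_S(\mathcal{F})^{\sigma_\infty - 1}$, and passing to the closed subgroup generated by commutators gives $G' \subseteq \overline{X_S(\mathcal{F})^{\sigma_\infty - 1}}$.

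The only real subtlety is the topological one: the statement refers to the closed commutator subgroup, so I must check that $X_S(\mathcal{F})^{\sigma_\infty - 1}$ is already closed. This follows because $X_S(\mathcal{F})$ is compact (as a quotient of the profinite group $G$) and the map $x \mapsto x^{\sigma_\infty - 1}$ is continuous, so its image is compact, hence closed in the Hausdorff group $X_S(\mathcal{F})$. Combining this with the two inclusions yields the required equality $G' = X_S(\mathcal{F})^{\sigma_\infty - 1}$. I expect this closure argument, together with the rigorous passage from $\mathbb{Z}_{\geq 0}$- to $\mathbb{Z}_p$-exponents on $\sigma_\infty$, to be the most delicate step; the algebraic manipulations with commutators are routine once the decomposition $(\ref{G})$ is in hand.
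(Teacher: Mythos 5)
Your proof is correct and reconstructs the standard argument of Washington's Lemma~13.14, which is precisely what the paper cites for this statement (the paper gives no independent proof). One cosmetic slip: $X_S(\mathcal{F})$ is compact as a \emph{closed subgroup} of the profinite group $G$, not as a quotient of it; this does not affect the argument, and you should also note explicitly that $X_S(\mathcal{F})^{\sigma_\infty - 1}$ is a subgroup (immediate from the abelianness of $X_S(\mathcal{F})$) before invoking closedness.
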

\begin{proof} See \cite[\textbf{Lemma 13.14}]{bib3}.
\end{proof}
\begin{lemma} \label{lemma 3.6} With the notations and the replacement above, let $Y_0$ be the $\mathbb{Z}_p-$submodule of $X_S(\mathcal{F})$ \normalfont{((\ref{Z_p structure}), (\ref{Z_p structure1}))} generated by $\{ a_\mathfrak{p} : \mathfrak{p} \in S \setminus \{\infty \} \}$ and $G^{\prime}$. Let $Y_n = \nu_n Y_0$ where
\begin{align*}
    v_n = 1 + \sigma_{\infty} + \sigma_{\infty}^2 + \cdots + \sigma_{\infty}^{p^n-1}.
\end{align*}Then 
\begin{align*}
    \forall n\geq 0,\quad X_n = \operatorname{Gal}(H_p^{S}(F_n)/F_n) \simeq \frac{X_S(\mathcal{F})}{Y_n}.
\end{align*}
\end{lemma}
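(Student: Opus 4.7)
The plan is to realize both $X_n$ and $X_S(\mathcal{F})/Y_n$ as explicit quotients of $G_n := \operatorname{Gal}(H_p^S(\mathcal{F})/F_n)$ and then identify those quotients. First I would observe that any unramified abelian $p$-extension of $F_n$ in which the places above $S$ split completely, when composed with $\mathcal{F}$, yields an unramified abelian $p$-extension of $\mathcal{F}$ with the same splitting property; hence $H_p^S(F_n) \subseteq H_p^S(\mathcal{F})$. Consequently $X_n = G_n/H_n$, where $H_n = \operatorname{Gal}(H_p^S(\mathcal{F})/H_p^S(F_n))$, and by the maximality characterising $H_p^S(F_n)$, the subgroup $H_n$ is the smallest closed normal subgroup of $G_n$ such that $G_n/H_n$ is abelian and every decomposition subgroup at a place above $S$ lies in $H_n$.

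Next I would compute the generators of $H_n$ explicitly. Applying the previous lemma with $F$ replaced by $F_n$, so that $\sigma_\infty^{p^n}$ now plays the role of the topological generator of $\operatorname{Gal}(\mathcal{F}/F_n)$, together with the factorisation $\sigma_\infty^{p^n}-1 = \nu_n(\sigma_\infty - 1)$ in the group ring, yields that the closed commutator subgroup $G_n^\prime$ equals $\nu_n G^\prime$. For each $\mathfrak{p} \in S$, since $\mathfrak{p}$ is totally inert in $\mathcal{F}/F$, the decomposition subgroup of $\mathfrak{p}$ in $G_n$ is topologically generated by $\sigma_\mathfrak{p}^{p^n}$. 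I would then verify by induction on $k$, using the relation $\sigma_\mathfrak{p} = a_\mathfrak{p}\sigma_\infty$ together with the semidirect decomposition $G = X_S(\mathcal{F})\rtimes\overline{\langle\sigma_\infty\rangle}$ (which follows from $D_\infty \cap X_S(\mathcal{F})=1$ and (\ref{G})), the identity
\begin{align*}
\sigma_\mathfrak{p}^{k} \;=\; \Bigl(\sum_{i=0}^{k-1}\sigma_\infty^i a_\mathfrak{p}\Bigr)\,\sigma_\infty^{k},
\end{align*}
where the $X_S(\mathcal{F})$-factor is written additively via its $\Lambda(\mathbb{Z}_p)$-structure; specialising $k=p^n$ then gives $\sigma_\mathfrak{p}^{p^n} = (\nu_n a_\mathfrak{p})\,\sigma_\infty^{p^n}$.

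With these formulas in hand, I would deduce the identification in two steps. Taking $\mathfrak{p}=\infty$ (where $a_\infty=1$) forces $\sigma_\infty^{p^n}\in H_n$, and then the formula $\sigma_\mathfrak{p}^{p^n} = (\nu_n a_\mathfrak{p})\sigma_\infty^{p^n}$ forces $\nu_n a_\mathfrak{p}\in H_n$ for every $\mathfrak{p}\in S\setminus\{\infty\}$; combined with $G_n^\prime = \nu_n G^\prime \subseteq H_n$, this shows $H_n\cap X_S(\mathcal{F})\supseteq \nu_n Y_0 = Y_n$, and hence the closed subgroup $N := Y_n\cdot\overline{\langle\sigma_\infty^{p^n}\rangle}$ is contained in $H_n$. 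For the reverse inclusion I would verify directly that $N$ is a closed normal subgroup of $G_n$ (using that $Y_0$ is stable under $\sigma_\infty$, since $(\sigma_\infty-1)Y_0\subseteq G^\prime\subseteq Y_0$, so it is a $\Lambda(\mathbb{Z}_p)$-submodule and $Y_n$ is $G_n$-stable), that $G_n/N$ is abelian (because $(\sigma_\infty^{p^n}-1)X_S(\mathcal{F}) = \nu_n G^\prime\subseteq Y_n$), and that every $\sigma_\mathfrak{p}^{p^n}$ already lies in $N$. By the minimality characterising $H_n$ this forces $H_n\subseteq N$, so $H_n=N$. Since $\overline{\langle\sigma_\infty^{p^n}\rangle}\cap X_S(\mathcal{F})=1$ and $G_n = X_S(\mathcal{F})\cdot\overline{\langle\sigma_\infty^{p^n}\rangle}$, it follows that $X_n = G_n/H_n \simeq X_S(\mathcal{F})/Y_n$.

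The main obstacle is the non-commutative computation of $\sigma_\mathfrak{p}^{p^n}$ inside $G_n$: one has to unwind conjugation by $\sigma_\infty$ carefully using the $\Lambda(\mathbb{Z}_p)$-action on $X_S(\mathcal{F})$ and match the resulting elements against the definition $Y_n = \nu_n Y_0$. Once that formula and the $F_n$-analog of the previous lemma are established, the remaining arguments are routine bookkeeping inside the profinite group $G_n$.
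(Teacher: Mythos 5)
Your proposal is correct and follows essentially the same strategy as the paper: identify $H_n = \operatorname{Gal}(H_p^S(\mathcal{F})/H_p^S(F_n))$ via the defining properties of $H_p^S(F_n)$ (abelian, unramified, $p$-power degree, $S$ split) and match the resulting generators against $\nu_n G'$ and $\nu_n a_{\mathfrak p}$ using $\sigma_{\mathfrak p}^{p^n} = (\nu_n a_{\mathfrak p})\sigma_\infty^{p^n}$, $G_n' = \nu_n G'$, and $D_\infty \cap X_S(\mathcal F) = 1$. The only difference is cosmetic — the paper works with fixed fields and treats $n=0$ explicitly before appealing to "similarly for $n\ge 1$," while you work directly with subgroups, give the induction proving $\sigma_{\mathfrak p}^{k}=\bigl(\sum_{i=0}^{k-1}\sigma_\infty^i a_{\mathfrak p}\bigr)\sigma_\infty^k$, and explicitly record the inclusion $H_p^S(F_n)\subseteq H_p^S(\mathcal F)$, which the paper uses without comment.
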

\begin{proof} If $n = 0$, we have $F \subseteq H_p^{S}(F) \subseteq H_p^S(\mathcal{F})$. We claim that 
\begin{align} \label{Galois}
    \operatorname{Gal}(H_p^{S}(\mathcal{F})/H_p^{S}(F)) \simeq \overline{\left<G^{\prime},D_\mathfrak{p}, \mathfrak{p} \in S\right>}.
\end{align} Indeed, the field extension $ H_p^{S}(\mathcal{F})^{\overline{\left<G^{\prime},D_\mathfrak{p}, \mathfrak{p} \in S\right>}}/F$ is abelian, unramified. Furthermore, every place in $S$ splits completely in this extension. Therefore, we derive that 
\begin{align*}
    H_p^{S}(\mathcal{F})^{\overline{\left<G^{\prime},D_\mathfrak{p}, \mathfrak{p} \in S\right>}} = H_p^{S}(F).
\end{align*} by definition of $H_p^{S}(F)$. Now, the isomorphism $(\ref{Galois})$ is a result of fundamental theorem of Galois theory. Furthermore, we observe that
\begin{align*}
    \overline{\left<G^{\prime}, D_\mathfrak{p}, \mathfrak{p} \in S\right>} =  \overline{\left<G^{\prime},a_\mathfrak{p}, \mathfrak{p} \in S \setminus \{ \infty\}\right>}D_{\infty},
\end{align*} and we compute that
\begin{align*}
    X_S(\mathcal{F})/Y_{0} & = X_S(\mathcal{F})D_{\infty} /Y_{0}D_{\infty}  \\
    & = \operatorname{Gal}(H_{p}^{S}(\mathcal{F})/F)/Y_{0}D_{\infty}  \\
    & =  \operatorname{Gal}(H_{p}^{S}(\mathcal{F})/F)/\overline{\left<G^{\prime},D_\mathfrak{p}, \mathfrak{p} \in S\right>} \\
    & = \operatorname{Gal}(H_p^{S}(F)/F) \\
    & = X_0.
\end{align*} For $n \geq 1,$ we have
\begin{align*}
    \forall \mathfrak{p} \in S, \quad \sigma_\mathfrak{p}^{p^n} = (\nu_na_\mathfrak{p}) (\sigma_{\infty})^{p^n}
\end{align*} and
\begin{align*}
    X_S(\mathcal{F})^{\sigma_{\infty}^{p^n}-1}=X_S(\mathcal{F})^{\nu_n(\sigma_{\infty} - 1)} =  (G^{\prime})^{\nu_n}.
\end{align*} Similarly to $(\ref{G})$, we derive that
\begin{align*}
    \forall n \geq 1, \quad \operatorname{Gal}(H_p^{S}(\mathcal{F})/F_n) =X_S(\mathcal{F}) D_{\infty}(H_p^{S}(\mathcal{F})/F_n)
\end{align*} and 
\begin{align*}
    \forall n \geq 1, \quad \operatorname{Gal}(H_p^{S}(\mathcal{F})/H_p^{S}(F_n)) = Y_{n}D_{\infty}(H_p^{S}(\mathcal{F})/F_n).
\end{align*}
Similarly to the case $n = 0$, we deduce that $X_S(\mathcal{F})/Y_n$ is $X_n$ for all $n \geq 1$ and thus conclude the proof.
\end{proof} \noindent The Iwasawa algebra $\Lambda(\mathbb{Z}_p)$ is isomorphic to $\mathbb{Z}_p\left[\left[T\right]\right]$ (cf. \cite[\textbf{Theorem 7.1}]{bib3}). We give one of the main results of this section.
\begin{theorem} \label{theorem 3.7} Let $F$ be any global function field. Let $\mathcal{F}/F$ be the constant $\mathbb{Z}_p-$extension. Let $S$ be a finite set of places of $F$. Then $X_S(\mathcal{F})$ is a finitely generated and torsion $\Lambda(\mathbb{Z}_p)-$module, i.e., it is pseudo-isomorphic to
\begin{align} \label{pseudo}
      \left(\bigoplus_{i=1}^s \frac{\mathbb{Z}_p\left[\left[T\right]\right]}{(p^{\mu_i})}\right) \oplus \left(\bigoplus_{j=1}^t \frac{\mathbb{Z}_p\left[\left[T\right]\right]}{(f_j(T))}\right).
\end{align}  where each $f_j(T)$ is a distinguished polynomial \emph{(cf. \cite[p.115, l-15]{bib3})}. Furthermore, the following equalities 
\begin{align*}
   \forall n\gg 0,  e_n = \lambda n + \mu p^n + \nu
\end{align*} hold where $e_n$ is the $p-$order of the cardinality of $X_n$ and $\lambda, \mu, \nu$ are constant integers such that $\lambda, \mu$ are Iwasawa invariants of $X_S(\mathcal{F})$ from $(\ref{pseudo})$ \normalfont{(cf. \cite[p.10]{bib1})}.
\end{theorem}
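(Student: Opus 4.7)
The plan is to adapt the classical Iwasawa argument for cyclotomic $\mathbb{Z}_p$-extensions of number fields (\cite[\textbf{Theorem 13.13}]{bib3}) to the constant $\mathbb{Z}_p$-extension setting, exploiting the identification $X_n \simeq X_S(\mathcal{F})/Y_n$ from \textbf{Lemma }\ref{lemma 3.6}. Under $\Lambda(\mathbb{Z}_p) \simeq \mathbb{Z}_p[[T]]$ with $T$ corresponding to $\sigma_{\infty} - 1$, the preceding lemma gives $T X_S(\mathcal{F}) = G^{\prime} \subseteq Y_0$. The short exact sequence
\begin{align*}
0 \longrightarrow Y_0/TX_S(\mathcal{F}) \longrightarrow X_S(\mathcal{F})/TX_S(\mathcal{F}) \longrightarrow X_0 \longrightarrow 0
\end{align*}
displays the middle term as an extension of the finite group $X_0$ (finite by function field class field theory, since it is a quotient of $\operatorname{Pic}(F)/\langle [\mathfrak{p}] \rangle_{\mathfrak{p} \in S}$) by the finitely generated $\mathbb{Z}_p$-module $Y_0/TX_S(\mathcal{F})$ generated by the images of the $a_{\mathfrak{p}}$. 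Hence $X_S(\mathcal{F})/(p,T)X_S(\mathcal{F})$ is finite, and the compact Nakayama lemma yields finite generation of $X_S(\mathcal{F})$ over $\Lambda(\mathbb{Z}_p)$.

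Torsion then follows at once: since $Y_n = \nu_n Y_0 \subseteq \nu_n X_S(\mathcal{F})$, the quotient $X_S(\mathcal{F})/\nu_n X_S(\mathcal{F})$ is a quotient of the finite group $X_n$ for every $n$. A nonzero free $\Lambda$-summand of $X_S(\mathcal{F})$ up to pseudo-isomorphism would embed $\Lambda/\nu_n \Lambda$ into this finite quotient, but $\nu_n$ is a distinguished polynomial of degree $p^n - 1$, so $\Lambda/\nu_n \Lambda$ is $\mathbb{Z}_p$-free of rank $p^n - 1$, a contradiction. Consequently $X_S(\mathcal{F})$ is a finitely generated torsion $\Lambda(\mathbb{Z}_p)$-module, and the structure theorem for $\mathbb{Z}_p[[T]]$-modules produces the pseudo-isomorphism to the stated elementary form.

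For the asymptotic formula, I would compute $|X_n| = |X_S(\mathcal{F})/Y_n|$ by factoring through $\nu_n X_S(\mathcal{F})$: one has
\begin{align*}
|X_S(\mathcal{F})/Y_n| \;=\; |X_S(\mathcal{F})/\nu_n X_S(\mathcal{F})| \cdot |\nu_n X_S(\mathcal{F})/\nu_n Y_0|,
\end{align*}
where multiplication by $\nu_n$ induces a surjection $X_0 \twoheadrightarrow \nu_n X_S(\mathcal{F})/\nu_n Y_0$, bounding the second factor uniformly by $|X_0|$. The first factor is then evaluated through the pseudo-isomorphism factor-by-factor: the standard calculations (cf. \cite[\textbf{Proposition 13.19}]{bib3}) yield $|\Lambda/(p^{\mu_i}, \nu_n)| \sim p^{\mu_i p^n}$ and $|\Lambda/(f_j(T), \nu_n)| \sim p^{\deg(f_j) \cdot n}$ for $n$ sufficiently large, producing $\mu = \sum_i \mu_i$ and $\lambda = \sum_j \deg f_j$. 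The main obstacle will be the careful bookkeeping: the pseudo-isomorphism has finite kernel and cokernel, which contribute only a bounded error when passing to the quotients by $\nu_n$, and one must verify that both this error and the index $[\nu_n X_S(\mathcal{F}) : \nu_n Y_0]$ stabilize for $n \gg 0$, so that their cumulative effect is absorbed into the constant $\nu$ in the asymptotic formula $e_n = \lambda n + \mu p^n + \nu$.
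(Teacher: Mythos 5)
Your proof is correct and takes essentially the same approach as the paper: after establishing the quotient description $X_m \simeq X_S(\mathcal{F})/Y_m$ via Proposition~\ref{proposition 3.4} and Lemma~\ref{lemma 3.6}, the paper simply defers to Washington \cite[pp.~281--285]{bib3} for the Nakayama finite-generation step, the torsion argument, and the asymptotic order computation, which is exactly what you have spelled out.
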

\begin{proof} There exists some $n \geq 0$ such that every place $\mathfrak{p}$ in $S$ is totally inert in $\mathcal{F}/F_n$ as a result of $\textbf{Proposition }\ref{proposition 3.4}$. For each $m \geq n$, we consider
\begin{align*}
  v_{m,n} := \frac{v_m}{v_n} = 1 + \sigma_{\infty}^{p^n} + \cdots + \sigma_{\infty}^{p^m-p^n}.
\end{align*} As a result of $\textbf{Lemma }\ref{lemma 3.6}$, we obtain that
\begin{align*}
    \forall m\geq n,\quad X_m \simeq  \frac{X_S(\mathcal{F})}{Y_m}
\end{align*} where $Y_m = v_{m,n}Y_n$. Now, the remainder of the proof appears in \cite[p281-285]{bib3}.
\end{proof} \noindent We wish to further see that the $\mu-$invariant of $X_S(\mathcal{F})$ vanishes. This requires us to interpret $X_S(\mathcal{F})$ as an inverse limit of class groups. Hence, we give the following definition.
\begin{definition} Let $F$ be a global function field. Let $\mathcal{O}_S(F)$ be the ring of $S-$integer in $F$ where $S$ is a finite set consisting of places in $F$. Denote the group of Weil divisors on $\emph{Spec}(\mathcal{O}_S(F))$ of degree 0 by $\emph{Div}^0(\mathcal{O}_S(F))$ and its subgroup consisting of all principal divisors on $\emph{Spec}(\mathcal{O}_S(F))$ by $\emph{Princ}(\mathcal{O}_S(F))$. Then the class group on $\emph{Spec}(\mathcal{O}_S(F))$ is the quotient group
\begin{align*}
  \emph{Cl}^S(F) := \frac{\emph{Div}^0(\mathcal{O}_S(F))}{\emph{Princ}(\mathcal{O}_S(F))}.
\end{align*}
\end{definition}
\begin{remark} \label{remark 3.9} Let $F$ be a global function field. Denote the class group of $F$ by $\operatorname{Cl}(F)$ \normalfont{(cf. \cite[\textbf{Definition 5.1.2}]{bib8})}. Then $\operatorname{Cl}^S(F)$ is a quotient of $\operatorname{Cl}(F)$ \normalfont{(cf. \cite[\textbf{II, Proposition 6.5}]{bib9})}.
\end{remark}
\begin{proposition} \label{proposition 3.10} Let $F$ be a global function field. Let $S$ be a finite set of places in $F$. Then there is an isomorphism of groups
\begin{align*}
     \emph{Gal}(H^S(F)/F) \simeq  \emph{Cl}^S(F)
\end{align*} where $H^S(F)$ is the maximal abelian extension of $F$ contained in $F^{\emph{sep}}$ such that $H^S(F)$ is unramified at all places of $F$ and all places in $S$ split completely.
\end{proposition}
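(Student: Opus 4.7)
The plan is to deduce this from Artin reciprocity for the global function field $F$. I would first construct a Frobenius map $\theta: \operatorname{Div}^0(\mathcal{O}_S(F)) \to \operatorname{Gal}(H^S(F)/F)$ by sending each place $\mathfrak{q} \notin S$ to its Frobenius element $\operatorname{Frob}_{\mathfrak{q}}$ and extending $\mathbb{Z}$-linearly; the definition makes sense because $H^S(F)/F$ is unramified at every place of $F$, so every such Frobenius is a well-defined element of $\operatorname{Gal}(H^S(F)/F)$.

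Next I would show that $\theta$ kills principal divisors, inducing a homomorphism $\bar\theta: \operatorname{Cl}^S(F) \to \operatorname{Gal}(H^S(F)/F)$. This is the product-formula side of reciprocity: for any $f \in F^{\times}$, the global relation $\prod_{v} \operatorname{Frob}_{v}^{v(f)} = 1$ inside $\operatorname{Gal}(H^S(F)/F)$, combined with the triviality of $\operatorname{Frob}_{\mathfrak{p}}$ for $\mathfrak{p} \in S$ (since every such place splits completely in $H^S(F)$), forces the restricted product over $\mathfrak{q} \notin S$ to equal the identity as well.

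For surjectivity of $\bar\theta$ I would invoke the Chebotarev density theorem for finite abelian extensions of function fields, exploiting that $\operatorname{Gal}(H^S(F)/F)$ is finite: the constant-field subextension of $H^S(F)/F$ is bounded by $F \cdot \mathbb{F}_{q^{\delta}}$ with $\delta = \gcd_{\mathfrak{p} \in S}\deg\mathfrak{p}$, and the geometric unramified abelian part of any global function field is finite. For injectivity I would switch to an idele-theoretic comparison: class field theory identifies $\operatorname{Gal}(H^S(F)/F)$ with $\mathbb{A}_F^{\times}/(F^{\times} \cdot U_S)$, where the subgroup $U_S = \prod_{\mathfrak{p} \in S} F_{\mathfrak{p}}^{\times} \cdot \prod_{\mathfrak{p} \notin S}\mathcal{O}_{\mathfrak{p}}^{\times}$ precisely encodes the "unramified everywhere, $S$-split completely" conditions, and this quotient matches $\operatorname{Cl}^S(F)$ under the standard identification of $\mathbb{A}_F^{\times}/U_S$ with the group of Weil divisors on $\operatorname{Spec}(\mathcal{O}_S(F))$.

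The main obstacle I anticipate is reconciling the degree-$0$ convention in the paper's definition of $\operatorname{Cl}^S(F) = \operatorname{Div}^0/\operatorname{Princ}$ with the idele-class quotient above: one has to verify that the contribution of the constant-field subextension inside $H^S(F)$ is absorbed exactly by restricting to $\operatorname{Div}^0(\mathcal{O}_S(F))$ and quotienting by $\operatorname{Princ}(\mathcal{O}_S(F))$ in the sense the paper adopts, which comes down to identifying the image of $F^{\times}$ inside the divisor group on $\operatorname{Spec}(\mathcal{O}_S(F))$ with $\operatorname{Princ}(\mathcal{O}_S(F))$ and checking that the degree bookkeeping on both sides agrees.
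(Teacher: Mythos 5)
The paper's own proof of this proposition is just a citation, so your proposal supplies the argument rather than matching one; the strategy you outline (Artin/Frobenius map on divisors prime to $S$, reciprocity to kill principal divisors, Chebotarev for surjectivity, the idelic description $\operatorname{Gal}(H^S(F)/F)\simeq \mathbb{A}_F^{\times}/(F^{\times}U_S)$ for the rest) is the standard class-field-theoretic route and is correct in outline. You are also right to single out the degree-bookkeeping as the delicate point, but you need to actually resolve it rather than only flag it, because it affects whether the stated isomorphism is even true as written. The idelic quotient you write down unwinds to $\operatorname{Div}(\mathcal{O}_S(F))/\operatorname{Princ}_S(F)=\operatorname{Pic}(\mathcal{O}_S(F))$, with no degree-zero restriction anywhere. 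On the affine curve $\operatorname{Spec}(\mathcal{O}_S(F))$, a principal divisor $\operatorname{div}_S(f)$ has degree $-\sum_{\mathfrak{p}\in S} v_{\mathfrak{p}}(f)\deg\mathfrak{p}$, which as $f$ ranges over $F^{\times}$ fills out $\delta\mathbb{Z}$ with $\delta=\gcd_{\mathfrak{p}\in S}\deg\mathfrak{p}$; consequently $\operatorname{Div}^0/(\operatorname{Princ}_S\cap\operatorname{Div}^0)$ sits inside $\operatorname{Pic}(\mathcal{O}_S(F))$ with cokernel $\mathbb{Z}/\delta\mathbb{Z}$, and this $\mathbb{Z}/\delta\mathbb{Z}$ is exactly the constant-field quotient $\operatorname{Gal}(F\mathbb{F}_{q^{\delta}}/F)$ that you already identified as a subextension of $H^S(F)$. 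So reading the paper's definition of $\operatorname{Cl}^S(F)=\operatorname{Div}^0/\operatorname{Princ}$ literally, your argument proves an exact sequence
\begin{align*}
0\longrightarrow \operatorname{Cl}^S(F)\longrightarrow \operatorname{Gal}(H^S(F)/F)\longrightarrow \mathbb{Z}/\delta\mathbb{Z}\longrightarrow 0,
\end{align*}
not the claimed isomorphism; the two coincide precisely when $\delta=1$. You either need to interpret $\operatorname{Cl}^S(F)$ as the full $\operatorname{Pic}(\mathcal{O}_S(F))$ (in which case your idelic computation finishes the proof cleanly), or impose a hypothesis forcing $\delta=1$. A related small gap: Chebotarev gives surjectivity of the Artin map from $\operatorname{Div}(\mathcal{O}_S(F))$, not from $\operatorname{Div}^0(\mathcal{O}_S(F))$ (a single prime $\mathfrak{q}\notin S$ has positive degree and so is never in $\operatorname{Div}^0$); your surjectivity step therefore secretly depends on the same degree-zero issue you are trying to postpone.
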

\begin{proof} See \cite[p.64, l-3]{bib10}.
\end{proof}
\begin{remark} \label{remark 3.11} Let $S$ be a finite set of places of $F$. To simply notations, we denote the class group on $\operatorname{Spec}(\mathcal{O}_{S(F_n)}(F_n))$ by $\operatorname{Cl}^S(F_n)$. Furthermore, we denote the $p-$Sylow subgroup of $\emph{Cl}^S(F_n)$ by $\operatorname{Cl}_p^{S}(F_n)$. As a direct result of $\normalfont{\textbf{Proposition }}\ref{proposition 3.10}$, there is an isomorphism
\begin{align*}
    X_n = \operatorname{Gal}(H_p^{S}(F_n)/F_n) \simeq \operatorname{Cl}_p^{S}(F_n).
\end{align*}
\end{remark}
\begin{theorem} \label{theorem 3.12} With respect to the notations above, there is a pseudo-isomorphism $\varphi$ of $\Lambda(\mathbb{Z}_p)-$modules
\begin{align*}
   \varphi: X_S(\mathcal{F}) \longrightarrow \mathbb{Z}_p^{\oplus\lambda}
\end{align*} where $\lambda$ is the $\lambda-$Iwasawa invariant of $X_S(\mathcal{F})$. Furthermore, there is an isomorphism of $\Lambda(\mathbb{Z}_p)-$modules
\begin{align*}
    X_S(\mathcal{F}) \simeq \mathbb{Z}_p^{\oplus \lambda} \oplus \operatorname{ker}(\varphi)
\end{align*} and therefore, $X_S(\mathcal{F})/pX_{S}(\mathcal{F})$ is finite.
\end{theorem}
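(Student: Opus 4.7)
My plan is to combine the structure theorem from Theorem 3.7 with the vanishing of the Iwasawa $\mu$-invariant of $X_S(\mathcal{F})$, the latter deduced from the zeta function of $F$. By Remark 3.11, $X_n \simeq \operatorname{Cl}_p^S(F_n)$, which by Remark 3.9 is a quotient of $\operatorname{Cl}_p(F_n)$; hence $e_n := v_p(|X_n|)$ satisfies $e_n \leq v_p(h(F_n))$, where $h(F_n)$ is the class number of $F_n$. Invoking the theory of zeta functions over global function fields (cf. \cite[\textbf{Chapter 11}]{bib5}), if the $L$-polynomial of $F$ factors as $L_F(t) = \prod_{i=1}^{2g}(1 - \alpha_i t)$ with Weil numbers $\alpha_i$ (algebraic integers with $|\alpha_i| = \sqrt{q}$), then
\begin{align*}
h(F_n) = \prod_{i=1}^{2g}(1 - \alpha_i^{p^n}).
\end{align*}

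The heart of the argument is a $p$-adic analysis of $v_p(1 - \alpha_i^{p^n})$. Fixing an embedding $\overline{\mathbb{Q}} \hookrightarrow \overline{\mathbb{Q}}_p$, each $\alpha_i$ lies in $\overline{\mathbb{Z}}_p$. If $v_p(\alpha_i) > 0$, then $\alpha_i^{p^n} \to 0$ $p$-adically and $v_p(1 - \alpha_i^{p^n}) = 0$ for $n \gg 0$. If $v_p(\alpha_i) = 0$ with reduction $\overline{\alpha_i} \neq 1$ in the residue field, then $\overline{\alpha_i}^{p^n}$ has the same order as $\overline{\alpha_i}$ (which is coprime to $p$), so $1 - \alpha_i^{p^n}$ reduces to a nonzero element and $v_p(1 - \alpha_i^{p^n}) = 0$. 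The remaining case is $\alpha_i \equiv 1 \pmod{\mathfrak{m}}$: writing $\alpha_i = 1 + \gamma$ with $v_p(\gamma) > 0$, the identity $v_p\!\left(\binom{p^n}{k}\right) = n - v_p(k)$ and a term-by-term comparison in the binomial expansion of $(1 + \gamma)^{p^n} - 1$ yield $v_p(1 - \alpha_i^{p^n}) = n + C_i$ for some constant $C_i$ and $n$ sufficiently large. Summing over $i$ gives $v_p(h(F_n)) = r n + O(1)$ for some integer $r$, whence $e_n = O(n)$.

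Comparing with $e_n = \lambda n + \mu p^n + \nu$ from Theorem 3.7 forces $\mu = 0$. Then by Theorem 3.7, $X_S(\mathcal{F})$ is pseudo-isomorphic to $\bigoplus_{j=1}^t \mathbb{Z}_p[[T]]/(f_j(T))$, which as a $\mathbb{Z}_p$-module is free of rank $\lambda = \sum_j \deg f_j$. So $X_S(\mathcal{F})$ is a finitely generated $\mathbb{Z}_p$-module of rank $\lambda$ with finite torsion subgroup. Applying the structure theorem for finitely generated modules over the PID $\mathbb{Z}_p$ yields an isomorphism $X_S(\mathcal{F}) \simeq \mathbb{Z}_p^{\oplus \lambda} \oplus \ker(\varphi)$, where $\varphi$ is the projection onto the free part and $\ker(\varphi)$ is the finite $\mathbb{Z}_p$-torsion submodule. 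Consequently, $X_S(\mathcal{F})/p X_S(\mathcal{F})$ is finite.

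The main obstacle is the $p$-adic analysis in the third case: extracting the linear dependence $v_p(1 - \alpha_i^{p^n}) = n + O(1)$ requires careful work with valuations in a possibly ramified finite extension of $\mathbb{Q}_p$ containing $\alpha_i$, balancing contributions of different binomial terms according to $v_p(\gamma)$ and the ramification index, and ensuring these local estimates sum to the required global bound on $v_p(h(F_n))$. The remaining assertions are formal consequences of Theorem 3.7 and the structure theorem for $\mathbb{Z}_p$-modules.
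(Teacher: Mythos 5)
Your macro-strategy coincides with the paper's: bound $e_n = v_p(|X_n|)$ above by $e_n' = v_p(h(F_n))$ via the chain $X_n \simeq \operatorname{Cl}_p^S(F_n) \twoheadleftarrow \operatorname{Cl}_p(F_n)$, show $e_n'$ grows at most linearly in $n$, compare with the Iwasawa formula $e_n = \lambda n + \mu p^n + \nu$ to force $\mu = 0$, then apply the structure theory of finitely generated $\mathbb{Z}_p$-modules. The genuine divergence is in the middle step: the paper simply cites Rosen's Theorem 11.5 for the statement $e_n' = \lambda' n + \nu'$, whereas you unpack that citation into an explicit $p$-adic analysis of the inverse roots $\alpha_i$ of the $L$-polynomial via $h(F_n) = \prod_i (1 - \alpha_i^{p^n})$. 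This is a valid and more self-contained route — it is in effect a proof of the cited result — so it buys independence from the reference at the cost of the extra $p$-adic bookkeeping you acknowledge as "the main obstacle."

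Two small refinements are needed to make that $p$-adic analysis airtight. First, your "term-by-term comparison" in case $\alpha_i \equiv 1 \pmod{\mathfrak{m}}$ is only immediate once $v(\gamma) > e/(p-1)$ (with $e$ the ramification index), where the $p$-adic logarithm controls everything; when $v(\gamma) \le e/(p-1)$ you should first replace $\gamma$ by the iterate coming from $(1+\gamma)^{p^m}$ for a fixed small $m$, which strictly increases the valuation until it crosses $e/(p-1)$, and only then read off linearity in $n$. Second, you should record why this iteration terminates: $\alpha_i$ cannot be a $p$-power root of unity because $|\alpha_i| = \sqrt{q} \neq 1$, so $\alpha_i^{p^n} - 1 \neq 0$ for all $n$. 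Lastly, in your final paragraph the map $\varphi$ that you take to be "the projection onto the free part" is a priori only $\mathbb{Z}_p$-linear, whereas the theorem asserts $\varphi$ is a pseudo-isomorphism of $\Lambda(\mathbb{Z}_p)$-modules; you should instead take $\varphi$ to be the structure-theorem pseudo-isomorphism onto the elementary module (which, once $\mu = 0$, is $\mathbb{Z}_p$-free of rank $\lambda$), and then observe that $X_S(\mathcal{F})$ is finitely generated of $\mathbb{Z}_p$-rank $\lambda$ with finite torsion, from which the splitting and the finiteness of $X_S(\mathcal{F})/pX_S(\mathcal{F})$ follow.
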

\begin{proof} As a direct application of \cite[\textbf{Theorem 11.5}]{bib5}, we know that
\begin{align*}
   \forall n \geq 0,\quad  e_n^{\prime} = \lambda^{\prime}n + \nu^{\prime}
\end{align*} where $e_n^{\prime}$ is the $p-$order of the cardinality of $\operatorname{Cl}(F_n)$. 
Combining $\textbf{Remark }\ref{remark 3.9}$ and $\textbf{Remark }\ref{remark 3.11}$, we obtain that
\begin{align} \label{inequality}
    \forall n \geq 0, \quad e_n \leq e_{n}^{\prime}
\end{align} where $e_n$ is the $p-$order of the cardinality of $X_n$ ($\ref{X_n}$). As a result of $\textbf{Theorem } \ref{theorem 3.7}$, we may rewrite the inequality $(\ref{inequality})$
\begin{align*}
    \forall n \gg 0,\quad e_n = \lambda n + \mu p^n + \nu \leq e_n^{\prime} = \lambda^{\prime} n + \nu^{\prime}
\end{align*} We further observe that the $\mu-$invariant of $X_S(\mathcal{F})$ must vanish. Hence, we conclude the following exact sequence of $\Lambda(\mathbb{Z}_p)-$modules
\begin{align*}
    0 \rightarrow \text{ker}(\varphi) \hookrightarrow X_S(\mathcal{F})  \xrightarrow{\varphi} \mathbb{Z}_p^{\oplus \lambda} \twoheadrightarrow \text{coker}(\varphi) \rightarrow 0
\end{align*} where $\varphi$ is a pseudo-isomorphism. Since $\mathbb{Z}_p$ is a P.I.D and any submodule of a free module over P.I.D is free, we must have that $\text{coker}(\varphi)$ is 0 and we further deduce
\begin{align*}
    X_S(\mathcal{F}) \simeq \mathbb{Z}_p^{\oplus \lambda} \oplus \text{ker}(\varphi).
\end{align*} because $\mathbb{Z}_p^{\oplus \lambda}$ is projective. Thus, the quotient in the statement is indeed finite.
\end{proof}
\begin{center}
    \section{Pontryagin dual}\label{sec3}
\end{center}
\justifying
In this section, we give the definition of the Pontryagin dual of a $\mathfrak{p}-$primary Iwasawa module in our context and we study its properties. We are interested in the equivalent condition for the Pontryagin dual of a $\Lambda( A_{\mathfrak{p}})-$module to be torsion and finitely generated with vanishing $\mu-$invariant, as in the statement of $\textbf{Theorem }\ref{theorem 1.3}$. 
\\[5mm]
\begin{definition} A $\Lambda( A_{\mathfrak{p}})-$module $M$ is $\mathfrak{p}-$primary if $ M =\bigcup_{n\geq 1} M[\mathfrak{p}^n].$
\end{definition}
\begin{definition} Denote the field of fraction of $A_{\mathfrak{p}}$ by $F_{\mathfrak{p}}$. The Pontryagin dual of a $\mathfrak{p}-$primary module $M$ is
\begin{align*}
    M^{\vee} : = \emph{Hom}_{A_{\mathfrak{p}}}(M, F_{\mathfrak{p}}/A_{\mathfrak{p}}).
\end{align*}
\end{definition}
\begin{remark} Denote the completion of $F$ with respect to $\infty$ by $F_{\infty}$. The ring $A$ as in \emph{(\ref{A})} is discrete and cocompact in $F_{\infty}$ \normalfont{(cf. \cite[\textbf{Lemma 7.6.16}]{bib2})}. Assuming that $M$ is $\mathfrak{p}-$primary, there is an isomorphism
\begin{align*}
    \operatorname{Hom}(M, F_{\infty}/A) \simeq \operatorname{Hom}_{A_{\mathfrak{p}}}(M, F_{\mathfrak{p}}/A_{\mathfrak{p}})
\end{align*} where the left hand side above is the usual Pontryagin dual.
\end{remark}
\noindent The following two lemmas will be needed to complete the proof of $\textbf{Theorem }\ref{theorem 4.6}$.
\begin{lemma} \label{Lemma 4.4} Denote $M^{\vee}$ by $N$. Then for all $n\geq 1$, there is an $A_{\mathfrak{p}}-$module isomorphism $(M[\mathfrak{p}^n])^{\vee} \simeq N/\mathfrak{p}^n N$.
\end{lemma}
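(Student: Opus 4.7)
The plan is to realize both sides of the claimed isomorphism as outer terms of a short exact sequence obtained by dualizing the multiplication-by-$\pi^n$ map on $M$, where $\pi$ is a fixed uniformizer of $A_{\mathfrak{p}}$ (so that $\mathfrak{p}^n N = \pi^n N$ since $\mathfrak{p} A_{\mathfrak{p}} = \pi A_{\mathfrak{p}}$). First I would record the tautological short exact sequence of $A_{\mathfrak{p}}$-modules
\begin{equation*}
0 \longrightarrow M[\mathfrak{p}^n] \longrightarrow M \xrightarrow{\,p\,} \pi^n M \longrightarrow 0,
\end{equation*}
in which $p$ is the multiplication-by-$\pi^n$ map surjected onto its image. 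Because $A_{\mathfrak{p}}$ is a discrete valuation ring, divisible $A_{\mathfrak{p}}$-modules are injective; hence $F_{\mathfrak{p}}/A_{\mathfrak{p}}$ is injective as an $A_{\mathfrak{p}}$-module, and the contravariant functor $\operatorname{Hom}_{A_{\mathfrak{p}}}(-, F_{\mathfrak{p}}/A_{\mathfrak{p}}) = (-)^{\vee}$ is exact. Applying it to the sequence above therefore produces an honest short exact sequence
\begin{equation*}
0 \longrightarrow (\pi^n M)^{\vee} \xrightarrow{p^{\vee}} N \xrightarrow{i^{\vee}} (M[\mathfrak{p}^n])^{\vee} \longrightarrow 0,
\end{equation*}
where $i^{\vee}$ is the restriction map and $p^{\vee}(\psi)(m) = \psi(\pi^n m)$.

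The substantive step is then to identify $\operatorname{image}(p^{\vee})$ with $\pi^n N$. For any $\phi \in N$, $A_{\mathfrak{p}}$-linearity gives $(\pi^n \phi)(m) = \phi(\pi^n m) = p^{\vee}(\phi|_{\pi^n M})(m)$, so $\pi^n N \subseteq \operatorname{image}(p^{\vee})$. For the reverse direction, given $\psi \in (\pi^n M)^{\vee}$ I would use injectivity of $F_{\mathfrak{p}}/A_{\mathfrak{p}}$ to extend $\psi$ to some $\phi \in N$ along the inclusion $\pi^n M \hookrightarrow M$; then $p^{\vee}(\psi)(m) = \psi(\pi^n m) = \phi(\pi^n m) = (\pi^n \phi)(m)$, so $p^{\vee}(\psi) \in \pi^n N$. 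Combining both inclusions, the short exact sequence collapses to the desired isomorphism $N/\mathfrak{p}^n N \simeq (M[\mathfrak{p}^n])^{\vee}$.

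The only genuinely non-formal point is the identification $\operatorname{image}(p^{\vee}) = \pi^n N$, and within that only the inclusion $\operatorname{image}(p^{\vee}) \subseteq \pi^n N$ truly uses the injectivity of $F_{\mathfrak{p}}/A_{\mathfrak{p}}$ over $A_{\mathfrak{p}}$; the rest of the argument is formal diagram chasing. I do not expect any deeper obstruction, since the $\mathfrak{p}$-primary hypothesis on $M$ is used only to ensure that the definition $M^{\vee} = \operatorname{Hom}_{A_{\mathfrak{p}}}(M, F_{\mathfrak{p}}/A_{\mathfrak{p}})$ agrees with the honest Pontryagin dual as in the remark preceding the lemma.
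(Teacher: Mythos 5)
Your proof is correct and follows essentially the same route as the paper's: both exploit exactness of the functor $\operatorname{Hom}_{A_{\mathfrak{p}}}(-,F_{\mathfrak{p}}/A_{\mathfrak{p}})$ (since $F_{\mathfrak{p}}/A_{\mathfrak{p}}$ is divisible, hence injective over the discrete valuation ring $A_{\mathfrak{p}}$) and then identify the kernel of the restriction map $N \to (M[\mathfrak{p}^n])^{\vee}$ with $\pi^n N$. Your handling of the nontrivial inclusion, factoring $f$ through $\pi^n M$ and extending by injectivity, is in fact a cleaner and more careful way to justify the step the paper writes as $g(m) = [1/\pi^n]\cdot f(m)$, whose well-definedness the paper leaves implicit.
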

\begin{proof} The quotient $F_{\mathfrak{p}}/A_{\mathfrak{p}}$ is a divisible $A_{\mathfrak{p}}-$module and therefore it is injective. This further leads to that the functor $\text{Hom}_{A_{\mathfrak{p}}}(\cdot, F_{\mathfrak{p}}/A_{\mathfrak{p}})$ is exact. Hence, we only need to show that the kernel of the following map is $\mathfrak{p}^nN$ 
\begin{align*}
    \varphi_n:  \text{Hom}_{A_{\mathfrak{p}}}(M, F_{\mathfrak{p}}/A_{\mathfrak{p}}) & \longrightarrow  \text{Hom}_{A_{\mathfrak{p}}}(M[\mathfrak{p}^n], F_{\mathfrak{p}}/A_{\mathfrak{p}}), \\
    f: M  \longrightarrow F_{\mathfrak{p}}/A_{\mathfrak{p}} & \longmapsto   f^{\prime}: M[\mathfrak{p}^n] \hookrightarrow M  \xrightarrow{f} F_{\mathfrak{p}}/A_{\mathfrak{p}}.
\end{align*} The inclusion $\mathfrak{p}^nN \subseteq \text{ker} (\varphi_n)$ is obvious. For the other inclusion, suppose $f$ is in $\text{ker}(\varphi_n)$ and $\pi$ is a uniformizer for $A_{\mathfrak{p}}$, we check that
\begin{align*}
    g: M &\longrightarrow F_{\mathfrak{p}}/A_{\mathfrak{p}}, \\
    m & \longmapsto \left[\frac{1}{\pi^n}\right]\cdot f(m),
\end{align*} is an $A_{\mathfrak{p}}-$module homomorphism. Therefore, we have that $f = \pi^ng$ belonging to $\mathfrak{p}^nN$.
\end{proof}
\begin{lemma}  \label{Lemma 4.5} Let $N$ and $N^{\prime}$ be two $\Lambda(A_{\mathfrak{p}})-$modules such that there is a pseudo-isomorphism $\varphi: N \longrightarrow N^{\prime}$. Then the induced homomorphism $\psi: N/\mathfrak{p}N\longrightarrow N^{\prime}/\mathfrak{p}N^{\prime}$ is a pseudo-isomorphism.
\end{lemma}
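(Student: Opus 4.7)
The plan is to realize $\psi$ as the composition of two pseudo-isomorphisms obtained by applying the snake lemma with multiplication by a uniformizer $\pi$ of $A_{\mathfrak{p}}$, noting that $\mathfrak{p} N = \pi N$ for any $A_{\mathfrak{p}}$-module $N$.

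First, set $K := \ker(\varphi)$, $I := \operatorname{im}(\varphi)$, and $C := \operatorname{coker}(\varphi)$; by hypothesis $K$ and $C$ are finite. Decompose $\varphi$ through the two short exact sequences of $\Lambda(A_{\mathfrak{p}})$-modules
\begin{align*}
0 \longrightarrow K \longrightarrow N \longrightarrow I \longrightarrow 0, \qquad 0 \longrightarrow I \longrightarrow N^{\prime} \longrightarrow C \longrightarrow 0.
\end{align*}
Apply the snake lemma to each sequence with the vertical arrows given by multiplication by $\pi$. For the first sequence, this yields an exact sequence of $A_{\mathfrak{p}}$-modules
\begin{align*}
K[\pi] \longrightarrow N[\pi] \longrightarrow I[\pi] \longrightarrow K/\pi K \longrightarrow N/\mathfrak{p} N \longrightarrow I/\mathfrak{p} I \longrightarrow 0.
\end{align*}
Since $K$ is finite, both $K[\pi]$ and $K/\pi K$ are finite, so the surjection $N/\mathfrak{p} N \twoheadrightarrow I/\mathfrak{p} I$ has finite kernel (it is a quotient of $K/\pi K$) and trivial cokernel; in particular it is a pseudo-isomorphism.

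For the second sequence, the same snake-lemma argument produces
\begin{align*}
0 \longrightarrow I[\pi] \longrightarrow N^{\prime}[\pi] \longrightarrow C[\pi] \longrightarrow I/\mathfrak{p} I \longrightarrow N^{\prime}/\mathfrak{p} N^{\prime} \longrightarrow C/\pi C \longrightarrow 0,
\end{align*}
and the finiteness of $C$ makes both $C[\pi]$ and $C/\pi C$ finite, so the map $I/\mathfrak{p} I \to N^{\prime}/\mathfrak{p} N^{\prime}$ has finite kernel and finite cokernel. A direct check shows the composition of these two maps equals $\psi$ (since $\psi$ is induced from $\varphi$ on quotients), so it remains only to observe that a composition of pseudo-isomorphisms is a pseudo-isomorphism: the kernel of the composition fits in an exact sequence whose outer terms are finite, and the cokernel likewise admits a finite filtration. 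This shows $\psi$ is a pseudo-isomorphism and completes the plan.

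The main obstacle here is bookkeeping rather than conceptual: one must match the $\pi$-multiplication viewpoint with the $\mathfrak{p}$-quotient viewpoint and verify that the induced map on the quotients coincides with the composition coming from the two snake-lemma sequences. Once the decomposition $N \twoheadrightarrow I \hookrightarrow N^{\prime}$ is set up, the argument is a mechanical two-step application of the snake lemma.
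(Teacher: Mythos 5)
Your proof is correct, but it takes a genuinely different route from the paper's. The paper applies the snake lemma once to the diagram with rows $0 \to \mathfrak{p}N \to N \to N/\mathfrak{p}N \to 0$ and $0 \to \mathfrak{p}N' \to N' \to N'/\mathfrak{p}N' \to 0$, the vertical maps being the induced restrictions of $\varphi$. The delicate point there is controlling $\ker(\psi)$: the six-term exact sequence bounds $\ker(\psi)/\operatorname{im}(f)$ by $\operatorname{coker}(\phi)$, and the paper then needs the extra observation that $[a]\mapsto[\pi a]$ gives a surjection $\operatorname{coker}(\varphi)\twoheadrightarrow\operatorname{coker}(\phi)$, so that $\operatorname{coker}(\phi)$ is itself finite. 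You instead factor $\varphi$ through its image $I$, run the snake lemma twice with multiplication by $\pi$ as the vertical maps on the short exact sequences $0\to K\to N\to I\to 0$ and $0\to I\to N'\to C\to 0$, and compose. Your version trades the paper's auxiliary surjectivity lemma for the (standard, and correctly sketched in your last paragraph) fact that pseudo-isomorphisms compose; each step of yours is a cleaner, more symmetric snake-lemma computation, since finiteness enters only at the ends $K$ and $C$. One small bookkeeping note: your first snake-lemma sequence should start with $0\to K[\pi]$, but this omission does not affect the argument, and your identification of the composition with $\psi$ is correct since both are induced by $\varphi$ on the $\mathfrak{p}$-quotients.
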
 
\begin{proof} Consider the following commutative diagram of $\Lambda(A_{\mathfrak{p}})-$modules
\begin{center}
    \begin{tikzcd}
	0 & {\mathfrak{p} N} & N & {N/\mathfrak{p} N} & 0 \\
	0 & {\mathfrak{p} N^{\prime}} & {N^{\prime}} & {N^{\prime}/\mathfrak{p} N^{\prime}} & 0
	\arrow[from=1-1, to=1-2]
	\arrow[from=1-2, to=1-3]
	\arrow["{\phi}", from=1-2, to=2-2]
	\arrow[from=1-3, to=1-4]
	\arrow["\varphi", from=1-3, to=2-3]
	\arrow[from=1-4, to=1-5]
	\arrow["{\psi}", from=1-4, to=2-4]
	\arrow[from=2-1, to=2-2]
	\arrow[from=2-2, to=2-3]
	\arrow[from=2-3, to=2-4]
	\arrow[from=2-4, to=2-5]
\end{tikzcd}
\end{center} where $\phi, \psi$ are induced by $\varphi$. By snake lemma, we deduce that the sequence
\begin{align*}
    0 \rightarrow\text{ker}(\phi) \rightarrow \text{ker}(\varphi) \xrightarrow{f} \text{ker}(\psi) \xrightarrow{\delta} \text{coker}(\phi) \xrightarrow{g} \text{coker}(\varphi) \rightarrow \text{coker}(\psi) \rightarrow 0
\end{align*} is exact. By assumption, we know that $\text{ker}(\varphi), \text{coker}(\varphi)$ are finite. Hence, we immediately have that $\text{coker}(\psi)$ and $\text{im}(f)$ are finite. Furthermore, we deduce that
\begin{align*}
    \# \frac{\text{ker}(\psi)}{\text{im}(f)} = \# \frac{\text{ker}(\psi)}{\text{ker}(\delta)}  \leq \# \text{coker}(\phi) \leq \#\text{coker}(\varphi).
\end{align*} The last inequality holds because we have the following surjection
\begin{align*}
    \operatorname{coker}(\varphi) = N^{\prime}/\varphi(N) &\twoheadrightarrow \operatorname{coker}(\phi)=\mathfrak{p}N^{\prime}/\mathfrak{p}\varphi(N) \\
    [a] & \longrightarrow [\pi a]
\end{align*} where $\pi$ is a uniformizer of $\mathfrak{p}$.
Therefore, we conclude that $\text{ker}(\psi)$ is also finite and $\psi$ is a pseudo-isomorphism.
\end{proof}\noindent Recall that there is an isomorphism $\Lambda(\mathbb{Z}_p) \simeq \mathbb{Z}_p\left[\left[T\right]\right]$ (cf. \cite[\textbf{Theorem 7.1}]{bib3}) and the same proof generalizes verbatim to $A_{\mathfrak{p}}$: $\Lambda(A_{\mathfrak{p}}) \simeq A_{\mathfrak{p}}\left[\left[T\right]\right]$.
\begin{theorem} \label{theorem 4.6} Let $M$ be a $\Lambda(A_{\mathfrak{p}})-$module such that it is $\mathfrak{p}-$primary. Then the following statements are equivalent.
\begin{enumerate}[label={\arabic{enumi}}.]
    \item Denote $M^{\vee}$ by $N$. The $\Lambda(A_{\mathfrak{p}})-$module $N$ is finitely generated and torsion such that its $\mu-$Iwasawa invariant vanishes.
    \item $M[\mathfrak{p}]$ is finite.
\end{enumerate} Moreover, if one of the statements above is satisfied, then $\lambda \leq \operatorname{dim}_{A/\mathfrak{p}} (M[\mathfrak{p}])$.
\end{theorem}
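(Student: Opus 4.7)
The plan is to reduce both implications to a direct computation with the Iwasawa structure theorem, using Lemma \ref{Lemma 4.4} and Lemma \ref{Lemma 4.5} as the main engines. Throughout I identify $\Lambda(A_{\mathfrak{p}})$ with $A_{\mathfrak{p}}[[T]]$, a complete local Noetherian ring with maximal ideal $\mathfrak{m}=(\pi,T)$ and residue field $\mathbb{F}_{\mathfrak{p}}$. The key bridge is Lemma \ref{Lemma 4.4}, which gives $N/\mathfrak{p}N \simeq (M[\mathfrak{p}])^{\vee}$; since Pontryagin duality preserves cardinality on finite modules, $M[\mathfrak{p}]$ is finite if and only if $N/\pi N$ is finite (of the same cardinality).

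For $(2)\Rightarrow(1)$, assume $M[\mathfrak{p}]$ is finite. Then $N/\pi N$ is finite, hence so is $N/\mathfrak{m}N$. Since $N=M^{\vee}$ carries the natural compact topology of the Pontryagin dual of the discrete $\mathfrak{p}$-primary module $M$, topological Nakayama over the complete local ring $\Lambda(A_{\mathfrak{p}})$ yields that $N$ is finitely generated. The structure theorem then provides a pseudo-isomorphism $\varphi : N \to E$ where
\[
E = \Lambda(A_{\mathfrak{p}})^{r} \oplus \bigoplus_{i}\Lambda(A_{\mathfrak{p}})/(\pi^{\mu_i}) \oplus \bigoplus_{j}\Lambda(A_{\mathfrak{p}})/(f_j(T))
\]
with each $f_j$ distinguished. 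By Lemma \ref{Lemma 4.5} this induces a pseudo-isomorphism $N/\pi N \to E/\pi E$, so $E/\pi E$ must be finite. But $\Lambda(A_{\mathfrak{p}})/\pi \simeq \mathbb{F}_{\mathfrak{p}}[[T]]$ is infinite, which forces $r=0$ and every $\mu_i=0$. Therefore $N$ is finitely generated, torsion, with vanishing $\mu$-invariant.

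For $(1)\Rightarrow(2)$, the structure theorem furnishes a pseudo-isomorphism $\varphi : N \to E = \bigoplus_{j}\Lambda(A_{\mathfrak{p}})/(f_j(T))$, and since each $f_j$ is distinguished, $E/\pi E \simeq \bigoplus_{j}\mathbb{F}_{\mathfrak{p}}[[T]]/(T^{\deg f_j})$ is finite. Lemma \ref{Lemma 4.5} then forces $N/\pi N$ to be finite, and Lemma \ref{Lemma 4.4} passes this back to yield that $M[\mathfrak{p}]$ is finite. For the $\lambda$-bound, Weierstrass division makes each $\Lambda(A_{\mathfrak{p}})/(f_j(T))$ a free $A_{\mathfrak{p}}$-module of rank $\deg f_j$, so $E$ has $A_{\mathfrak{p}}$-rank $\lambda=\sum_{j}\deg f_j$. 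Because $\ker\varphi$ and $\operatorname{coker}\varphi$ are finite, $N$ is also finitely generated over $A_{\mathfrak{p}}$ of the same rank $\lambda$, and Nakayama over the discrete valuation ring $A_{\mathfrak{p}}$ bounds the minimal number of $A_{\mathfrak{p}}$-generators of $N$ by $\dim_{\mathbb{F}_{\mathfrak{p}}}N/\pi N = \dim_{A/\mathfrak{p}}M[\mathfrak{p}]$, which in turn bounds the rank $\lambda$.

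The most delicate step will be the topological Nakayama argument in $(2)\Rightarrow(1)$: I need to spell out that $N$ really is a compact $\Lambda(A_{\mathfrak{p}})$-module under the compact-open topology inherited from being the dual of the discrete module $M$, and that "pseudo-null equals finite" in $A_{\mathfrak{p}}[[T]]$ exactly as in the classical $\mathbb{Z}_p[[T]]$ setting (a consequence of $A_{\mathfrak{p}}[[T]]$ being a two-dimensional regular local ring, so its height-one primes are principal). Once these foundational points are in hand, the rest is a direct transposition of the standard Iwasawa-theoretic template from $\mathbb{Z}_p$ to $A_{\mathfrak{p}}$.
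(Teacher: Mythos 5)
Your proof is correct and follows essentially the same route as the paper: Lemma \ref{Lemma 4.4} translates finiteness of $M[\mathfrak{p}]$ into finiteness of $N/\mathfrak{p}N$, topological Nakayama gives $N$ finitely generated, the structure theorem together with Lemma \ref{Lemma 4.5} pins down the shape of the elementary module, and Pontryagin double duality passes the conclusion back to $M$. You do improve on two local steps in the paper's write-up. First, for $(2)\Rightarrow(1)$ you deduce torsionness and $\mu=0$ directly by observing that finiteness of $E/\pi E$ forces the free rank and all $\mu_i$ to vanish, whereas the paper cites an external result at that point. Second, and more substantively, your $\lambda$-bound via Nakayama over $A_{\mathfrak{p}}$ (rank of $N$ over $A_{\mathfrak{p}}$ is at most the minimal number of $A_{\mathfrak{p}}$-generators, which equals $\dim_{\mathbb{F}_{\mathfrak{p}}} N/\pi N = \dim_{A/\mathfrak{p}} M[\mathfrak{p}]$) sidesteps the paper's assertion, borrowed from the proof of Theorem \ref{theorem 3.12}, that $\operatorname{coker}(\varphi)=0$; as stated there this does not follow from $\varphi(N)$ being free with finite index in $A_{\mathfrak{p}}^{\oplus\lambda}$ (e.g.\ $\pi A_{\mathfrak{p}}\subsetneq A_{\mathfrak{p}}$ is free of finite index), so your Nakayama argument is the cleaner and more robust way to land the same bound.
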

\begin{proof}  $(1) \Longrightarrow (2):$ By assumption, we have a pseudo-isomorphism
\begin{align*}
    \varphi: N \longrightarrow A_{\mathfrak{p}}^{\oplus \lambda}
\end{align*} where $\lambda$ is the $\lambda-$invariant of $N$. By $\textbf{Lemma }\ref{Lemma 4.4}$ and $\textbf{Lemma }\ref{Lemma 4.5}$, we further deduce that
\begin{align*}
    \left(M[\mathfrak{p}]\right)^{\vee} \simeq N/\mathfrak{p}N \simeq \left(\frac{A_{\mathfrak{p}}}{\mathfrak{p}A_{\mathfrak{p}}}\right)^{\oplus \lambda}
\end{align*} which is finite. Since there is a canonical isomorphism
\begin{align*}
    M[\mathfrak{p}] \simeq (\left(M[\mathfrak{p}]\right)^{\vee} )^{\vee},
\end{align*} we conclude that $M[\mathfrak{p}]$ is finite. \\[5mm]
$(2) \Longrightarrow (1):$ We consider the following exact sequence
\begin{align*}
    0 \rightarrow M[\mathfrak{p}] \rightarrow M[\mathfrak{p}^{n+1}] &\rightarrow M[\mathfrak{p}^n], \\
    m&\longmapsto \pi m.
\end{align*} where $\pi$ is a fixed uniformizer of $A_{\mathfrak{p}}$. We deduce by induction on $n$ that $M[\mathfrak{p}^n]$ is finite for all $n$. Furthermore, we have
\begin{align*}
    N  \simeq \varprojlim_n \text{Hom}_{A_{\mathfrak{p}}}( M[\mathfrak{p^n}],F_{\mathfrak{p}}/A_{\mathfrak{p}}).
\end{align*}
Since the module $ M[\mathfrak{p^n}]$ is finite for all $n$ and so $\text{Hom}_{A_{\mathfrak{p}}}( M[\mathfrak{p^n}],F_{\mathfrak{p}}/A_{\mathfrak{p}})$ is finite, $N$ is compact. To see $N$ is finitely generated, we only need to show $N/(T,\mathfrak{p})$ is finite by Nakayama's lemma (cf. $\cite[\textbf{Lemma 13.16}]{bib3}$). But $N/(T,\mathfrak{p})$ is a quotient of $N/\mathfrak{p}N$ which is $(M[\mathfrak{p}])^{\vee}$ by $\textbf{Lemma }\ref{Lemma 4.4}$ and it is finite by assumption. To finish the proof of this direction, we need to show that the $\mu-$invariant of $N$ vanishes and $N$ is torsion. This is an easy consequence of $\cite[\textbf{Proposition 3.3}]{bib1}$ and the fact that $N/\mathfrak{p}N$ is finite. \\[5mm] If we assume that $\text{(1)}$ is true, we have a pseudo-isomorphism $\varphi:N \longrightarrow A_{\mathfrak{p}}^{\oplus \lambda}$. By the same argument as in the proof of $\textbf{Theorem }\ref{theorem 3.12}$, we have
 \begin{align*}
     N \simeq A_{\mathfrak{p}}^{\oplus \lambda} \oplus \text{ker}(\varphi)
 \end{align*} and therefore, we deduce that
 \begin{align*}
    \lambda \leq \text{dim}_{A/\mathfrak{p}} N/\mathfrak{p}N = \text{dim}_{A/\mathfrak{p}} (M[\mathfrak{p}])^{\vee} = \text{dim}_{A/\mathfrak{p}} M[\mathfrak{p}].
 \end{align*} Therefore, we conclude the proof of the theorem.
\end{proof}

\begin{center}
    \section{Fine Selmer groups and main results}\label{sec4}
\end{center}

\justifying

\noindent In this section, we define the fine Selmer group and the residual fine Selmer group associated to a Drinfeld module over $F$ and we study their properties. We aim to give a proof for $\textbf{Theorem }\ref{theorem 1.3}$ based on the results of Sections 2, 3 and 4. \\[5mm] For a fixed Drinfeld module $\phi$ over a global function field $F$, we consider the set $S$ containing places of $F$ where
\begin{align*}
    S = \{ \mathfrak{p}, \infty\} \cup \{ \text{ places corresponding to the bad reductions of } \phi \}.
\end{align*}  Denote the maximal separable extension of $F$ in which the places are ramified outside $S$ by $F_S$ and denote the constant $\mathbb{Z}_p-$extension of $F$ by $\mathcal{F}$. With respect to this setup, we have the following tower of field extensions
\begin{align} \label{twof}
    F \subseteq \mathcal{F}  \subseteq F_S \subseteq F^{\text{sep}} = \mathcal{F}^{\text{sep}}.
\end{align} The second inclusion in $(\ref{twof})$ holds because constant extensions of a function field are unramified (cf. \cite[\textbf{Theorem 3.6.3 (a)}]{bib8}) and in particular, it is contained in $F_S$. The last equality in $(\ref{twof})$ is valid because $\mathcal{F}/F$ is separable since for each $n \geq 1$, we have $F_n = F(\alpha_n)$ where the minimal polynomial of $\alpha_n$ is $x^{p^n} - x$ (cf. \cite[\textbf{Lemma 3.6.2}]{bib8}) and it is separable over $F$. Therefore, we may deduce the following exact sequence of Galois cohomologies
\begin{align} \label{inf-res}
   0 \rightarrow H^1(F_S/\mathcal{F},\phi[\mathfrak{p}^{\infty}]) \xrightarrow{ \textbf{inf}}H^1(\mathcal{F}^{\text{sep}}/\mathcal{F},\phi[\mathfrak{p}^{\infty}]) \xrightarrow{\textbf{res}}H^1(\mathcal{F}^{\text{sep}}/F_S,\phi[\mathfrak{p}^{\infty}])^{\operatorname{Gal}(F_S/\mathcal{F})}
\end{align}which is the inflation-restriction sequence (cf. \cite[\textbf{VII,\S 6,Proposition 4}]{bib12}). Notice that $\operatorname{Gal}(F^{\operatorname{sep}}/F_S)$ acts trivally on $\phi[\mathfrak{p}^{\infty}]$ (cf. $\cite[\textbf{Theorem 6.3.1}]{bib2}$). Now, given an arbitrary place $v$ in $F$, we denote the places of $\mathcal{F}$ above $v$ by $v(\mathcal{F})$ which is a finite set. For each $w$ in $v(\mathcal{F})$, we denote the union of the completion of $F^{\prime}$ at $w$ where $F^{\prime}$ ranges over all the finite extensions of $F$ contained in $\mathcal{F}$ by $\mathcal{F}_w$. Then we put
\begin{align} \label{Jv}
    J_v(\phi[\mathfrak{p}^{\infty}]/\mathcal{F}) := \prod_{w \in v(\mathcal{F})} H^1(\mathcal{F}_w^{\text{sep}}/\mathcal{F}_w, \phi[\mathfrak{p}^{\infty}]).
\end{align}We further consider the map
\begin{align} \label{phi}
    \Phi: H^{1}(F_S/\mathcal{F},\phi[\mathfrak{p}^{\infty}]) & \longrightarrow \bigoplus_{v \in S} J_\mathfrak{p}(\phi[\mathfrak{p}^{\infty}]/\mathcal{F})
\end{align} where $\Phi$ is the composition of the map $\textbf{inf}$ as in $(\ref{inf-res})$ to $H^1(\mathcal{F}^{\text{sep}}/\mathcal{F},\phi[\mathfrak{p}^{\infty}])$ and the restriction map from $H^1(\mathcal{F}^{\text{sep}}/\mathcal{F},\phi[\mathfrak{p}^{\infty}])$ to each $ H^1(\mathcal{F}_w^{\text{sep}}/\mathcal{F}_w, \phi[\mathfrak{p}^{\infty}])$, respectively. \\[5mm]
\begin{definition} Let $\phi$ be a Drinfeld module over $F$. Let $\mathfrak{p}$ be a non-zero prime ideal of $A$ as in \normalfont{(\ref{A})}. Let $\mathcal{F}$ be the constant $\mathbb{Z}_p-$extension of $F$. Then the fine Selmer group associated to $\phi$ is defined to be
\begin{align*}
    \operatorname{Sel}^S_0(\phi[\mathfrak{p}^{\infty}]/\mathcal{F}) := \operatorname{ker}(\Phi)
\end{align*} where $S$ is the set as described in $(\ref{S})$.
\end{definition} \noindent Now we equip a $\Lambda(A_{\mathfrak{p}})-$module structure on $\operatorname{Sel}_0^S(\phi[\mathfrak{p}^{\infty}]/\mathcal{F})$ and its Pontryagin dual  where $A_{\mathfrak{p}}$ is the completion of $A$ as in $(\ref{A})$ with respect to $\mathfrak{p}$ (cf. \cite[\textbf{Theorem 7.1}]{bib3}) and $  \Lambda(A_{\mathfrak{p}}) =A_{\mathfrak{p}}\left[\left[\operatorname{Gal}(\mathcal{F}/F)\right]\right] $. Consider the isomorphism
\begin{align*}
   A_{\mathfrak{p}}\left[\left[\operatorname{Gal}(\mathcal{F}/F)\right]\right] \simeq \varprojlim_n A_{\mathfrak{p}}[\operatorname{Gal}(F_n/F)]
\end{align*} Furthermore, there is an isomorphism for the fine Selmer group
\begin{align*}
    \text{Sel}_0^{S}(\phi[\mathfrak{p}^{\infty}]/\mathcal{F}) \simeq \varinjlim_n   \text{Sel}_0^{S}(\phi[\mathfrak{p}^{\infty}]/F_n)
\end{align*} (cf. \cite[\textbf{(44), (45)}]{bib6}). For each $n \geq 0$, there is an $A_{\mathfrak{p}}[\operatorname{Gal}(F_n/F)]-$module structure on  $\text{Sel}_0^{S}(\phi[\mathfrak{p}^{\infty}]/F_n)$ through the following Galois action
\begin{align*}
    \operatorname{Gal}(F_n/F) \times \text{Sel}_0^{S}(\phi[\mathfrak{p}^{\infty}]/F_n) & \longrightarrow \text{Sel}_0^{S}(\phi[\mathfrak{p}^{\infty}]/F_n) \\
    (\sigma, f) & \longmapsto \tau  \longmapsto \Tilde{\sigma} f(\Tilde{\sigma}^{-1}\tau \Tilde{\sigma}),
\end{align*} where $\Tilde{\sigma}$ is any lift in $\operatorname{Gal}(F^{\text{sep}}/F)$ of $\sigma$. This Galois action is independent of the choice of $\Tilde{\sigma}$ and we therefore equip the fine Selmer group with a $\Lambda(A_{\mathfrak{p}})-$module structure. Moreover, since $\text{Sel}_0^{S}(\phi[\mathfrak{p}^{\infty}]/\mathcal{F})$ is $\mathfrak{p}-$primary, we set the Pontryagin dual of $\text{Sel}_0^{S}(\phi[\mathfrak{p}^{\infty}]/\mathcal{F})$ to be
\begin{align*}
    Y^S(\phi[\mathfrak{p}^{\infty}]/\mathcal{F}) := \text{Hom}_{A_{\mathfrak{p}}}(\text{Sel}_0^{S}(\phi[\mathfrak{p}^{\infty}]/\mathcal{F}), F_{\mathfrak{p}}/A_{\mathfrak{p}}).
\end{align*} The $\Lambda(A_{\mathfrak{p}})-$module structure on $Y^S(\phi[\mathfrak{p}^{\infty}]/\mathcal{F})$ is constructed in the following way
\begin{align*}
   \Lambda(A_{\mathfrak{p}}) \times Y^S(\phi[\mathfrak{p}^{\infty}]/\mathcal{F}) & \longrightarrow Y^S(\phi[\mathfrak{p}^{\infty}]/\mathcal{F}) \\
    (\gamma, \psi) & \longmapsto f\longmapsto \psi(\gamma^{\prime} f)
\end{align*} where $\gamma\prime$ is the image of $\gamma$ under the isomorphism 
\begin{align*}
     A_{\mathfrak{p}}\left[\left[\operatorname{Gal}(\mathcal{F}/F)\right]\right] &\longrightarrow A_{\mathfrak{p}}\left[\left[\operatorname{Gal}(\mathcal{F}/F)\right]\right], \\
     \sum a \sigma_a & \longmapsto \sum a\sigma_a^{-1}.
\end{align*} For the proof of $\textbf{Theorem }\ref{theorem 1.3}$, we further give the definition of residual fine Selmer groups and study their properties. We recall the map $\Phi$ in $(\ref{phi})$ and similarly, we consider the map $\Psi$ where we replace $\phi[\mathfrak{p}^{\infty}]$ in $(\ref{inf-res})$ and $(\ref{Jv})$ with $\phi[\mathfrak{p}]:$
\begin{align} \label{Psi}
     \Psi: H^{1}(F_S/\mathcal{F},\phi[\mathfrak{p}]) & \longrightarrow \bigoplus_{v \in S} J_v(\phi[\mathfrak{p}]/\mathcal{F}).
\end{align}
\begin{definition} \label{definition 5.2} Let $\phi$ be a Drinfeld module over $F$. Let $\mathfrak{p}$ be a non-zero prime ideal of $A$ as in $(\ref{A})$. Let $\mathcal{F}$ be the constant $\mathbb{Z}_p-$extension of $F$. Then the residual fine Selmer group associated to $\phi$ is defined to be
\begin{align*}
    \emph{Sel}^S_0(\phi[\mathfrak{p}]/\mathcal{F}) := \emph{ker}(\Psi)
\end{align*} where $S$ is the set as described in $(\ref{S})$ and $\Psi$ is the map as in $(\ref{Psi})$.
\end{definition}
\noindent There is an action of $\operatorname{Gal}(\mathcal{F}^{\operatorname{sep}}/\mathcal{F})$ on both $\phi[\mathfrak{p}]$ and $\phi[\mathfrak{p}^{\infty}]$ induced by the natural action of $\operatorname{Gal}(\mathcal{F}^{\operatorname{sep}}/F)\simeq \operatorname{Gal}(F^{\operatorname{sep}}/F)$ on $\phi[\mathfrak{p}]$ and $\phi[\mathfrak{p}^{\infty}]$. In addition, we restrict this action to the decomposition group $\operatorname{Gal}(\mathcal{F}_w^{\operatorname{sep}}/\mathcal{F}_w)$ where $w$ is a place above $v$ in $S$. Denote the field $A_{\mathfrak{p}}/\mathfrak{p}A_{\mathfrak{p}}$ by $\mathbb{F}_{\mathfrak{p}}$. We now derive the following short exact sequence of Galois cohomologies with respect to the short exact sequence $(\ref{galoiscohomology})$
\begin{align} \label{kummer}
    0 \rightarrow H^0(\mathcal{F}_w^{\operatorname{sep}}/\mathcal{F}_w,\phi[\mathfrak{p}^{\infty}])\otimes_{A_{\mathfrak{p}}} \mathbb{F}_{\mathfrak{p}} \rightarrow H^1(\mathcal{F}_w^{\operatorname{sep}}/\mathcal{F}_w, \phi[\mathfrak{p}]) \xrightarrow{\varphi} H^1(\mathcal{F}_w^{\operatorname{sep}}/\mathcal{F}_w, \phi[\mathfrak{p}^{\infty}])[\mathfrak{p}]\rightarrow 0.
\end{align} Given any $v$ in $S$ and $w$ in $v(\mathcal{F})$, we construct the following map
\begin{align*}
    \prod_{w \in v(\mathcal{F})} h_w: \prod_{w \in v(\mathcal{F})}  H^1( \operatorname{Gal}(\mathcal{F}_w^{\operatorname{sep}}/\mathcal{F}_w), \phi[\mathfrak{p}]) \rightarrow \prod_{w \in v(\mathcal{F})} H^1( \operatorname{Gal}(\mathcal{F}_w^{\operatorname{sep}}/\mathcal{F}_w), \phi[\mathfrak{p}^{\infty}])[\mathfrak{p}]
\end{align*} where each $h_w$ corresponds to the map $\varphi$ defined in $(\ref{kummer})$, respectively. We repeat the procedure for each $v$ in $S$ and obtain the map
\begin{align} \label{h}
    h: \bigoplus_{v \in S} J_v(\phi[\mathfrak{p}]/\mathcal{F}) \rightarrow \bigoplus_{v \in S} J_v(\phi[\mathfrak{p}^{\infty}]/\mathcal{F})[\mathfrak{p}].
\end{align} On the other hand, since the natural action of $\operatorname{Gal}(F^{\operatorname{sep}}/F)$ on $\phi[\mathfrak{p}]$ and $\phi[\mathfrak{p}^{\infty}]$ restricting to the subgroup $\operatorname{Gal}(F^{\operatorname{sep}}/F_S)$ is trivial (cf. $\cite[\textbf{Theorem 6.3.1}]{bib2}$), there is a well-defined action of $\operatorname{Gal}(F_S/F)$ on $\phi[\mathfrak{p}]$ and $\phi[\mathfrak{p}^{\infty}]$ induced by the natural action of $\operatorname{Gal}(F^{\operatorname{sep}}/F)$. Similarly to the construction of the map $\varphi$ ($\ref{kummer}$), we obtain the following surjective map
\begin{align*}
    \beta: H^1(F_S/\mathcal{F},\phi[\mathfrak{p}]) \rightarrow H^1(F_S/\mathcal{F},\phi[\mathfrak{p}^{\infty}])[\mathfrak{p}].
\end{align*} Furthermore, we restrict $\beta$ to the map
\begin{align*}
    \gamma:  \operatorname{Sel}_0^S(\phi[\mathfrak{p}]/\mathcal{F}) \longrightarrow \operatorname{Sel}_0^S(\phi[\mathfrak{p}^{\infty}]/\mathcal{F})[\mathfrak{p}].
\end{align*} We therefore obtain the following commutative diagram
\begin{center} \label{diagram}
    \begin{tikzcd}
	& {\operatorname{Sel}_0^S(\phi[\mathfrak{p}]/\mathcal{F})} & {H^1(F_S/\mathcal{F},\phi[\mathfrak{p}])} & {\operatorname{im}(\Psi)} & 0 \\
	0 & {\operatorname{Sel}_0^S(\phi[\mathfrak{p}^{\infty}]/\mathcal{F})}[\mathfrak{p}] & {H^1(F_S/\mathcal{F},\phi[\mathfrak{p}^{\infty}])[\mathfrak{p}]} & {\bigoplus_{v \in S} J_v(\phi[\mathfrak{p}^{\infty}]/\mathcal{F})[\mathfrak{p}]}
	\arrow[from=1-2, to=1-3]
	\arrow["\gamma", from=1-2, to=2-2]
	\arrow[from=1-3, to=1-4]
	\arrow["\beta", from=1-3, to=2-3]
	\arrow[from=1-4, to=1-5]
	\arrow["{h^{\prime}}", from=1-4, to=2-4]
	\arrow[from=2-1, to=2-2]
	\arrow[from=2-2, to=2-3]
	\arrow[from=2-3, to=2-4]
\end{tikzcd}
\end{center} where $\Psi$ is the map as in $(\ref{Psi})$ and $h^{\prime}$ is the restriction of the map $h$ $(\ref{h})$ to $\operatorname{im}(\Psi)$.
\begin{lemma} \label{lemma 5.3}With respect to the notations above, the map $\gamma$
\begin{align*}
    \gamma: \emph{Sel}_0^S(\phi[\mathfrak{p}]/\mathcal{F}) \longrightarrow \emph{Sel}_0^S(\phi[\mathfrak{p}^{\infty}]/\mathcal{F})[\mathfrak{p}]
\end{align*} is a pseudo-isomorphism. Furthermore, the kernel and cokernel of $\gamma$ satisfy
\begin{align*}
   & \# \operatorname{ker} (\gamma) \leq \# \left(H^0(F_S/\mathcal{F}, \phi[\mathfrak{p}^{\infty}])\otimes_{A_{\mathfrak{p}}}\mathbb{F}_{\mathfrak{p}}\right), \\
    &  \# \operatorname{coker} (\gamma) \leq \# \prod_{w\in S(\mathcal{F})}\left(H^0(\mathcal{F}_w^{\operatorname{sep}}/\mathcal{F}_w, \phi[\mathfrak{p}^{\infty}])\otimes_{A_{\mathfrak{p}}}\mathbb{F}_{\mathfrak{p}}\right).
\end{align*}
\end{lemma}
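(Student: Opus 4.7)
The plan is to apply the snake lemma to the commutative diagram displayed just before the statement. Both rows are exact sequences of $A_{\mathfrak{p}}-$modules: the top row because $\operatorname{Sel}_0^S(\phi[\mathfrak{p}]/\mathcal{F})$ is by definition $\operatorname{ker}(\Psi)$, and the bottom row because forming $\mathfrak{p}-$torsion is left exact and therefore preserves the kernel relation defining the fine Selmer group. This will produce an exact sequence of kernels and cokernels from which both size bounds in the statement follow by counting.

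Next I would identify $\operatorname{ker}(\beta)$ and $\operatorname{ker}(h')$ by invoking the long exact sequence in Galois cohomology attached to the short exact sequence $(\ref{galoiscohomology})$. Globally, the piece
\begin{align*}
H^0(F_S/\mathcal{F}, \phi[\mathfrak{p}^{\infty}]) \xrightarrow{\pi} H^0(F_S/\mathcal{F}, \phi[\mathfrak{p}^{\infty}]) \to H^1(F_S/\mathcal{F}, \phi[\mathfrak{p}]) \xrightarrow{\beta} H^1(F_S/\mathcal{F}, \phi[\mathfrak{p}^{\infty}])[\mathfrak{p}] \to 0
\end{align*}
identifies $\operatorname{ker}(\beta)$ with $H^0(F_S/\mathcal{F}, \phi[\mathfrak{p}^{\infty}]) \otimes_{A_{\mathfrak{p}}} \mathbb{F}_{\mathfrak{p}}$ and reconfirms that $\beta$ is surjective. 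The analogous Kummer-type argument applied to each decomposition group $\operatorname{Gal}(\mathcal{F}_w^{\operatorname{sep}}/\mathcal{F}_w)$ with $w \in S(\mathcal{F})$ yields $\operatorname{ker}(h) = \bigoplus_{w \in S(\mathcal{F})} H^0(\mathcal{F}_w^{\operatorname{sep}}/\mathcal{F}_w, \phi[\mathfrak{p}^{\infty}]) \otimes_{A_{\mathfrak{p}}} \mathbb{F}_{\mathfrak{p}}$; since $h'$ is the restriction of $h$ to $\operatorname{im}(\Psi)$, we have $\operatorname{ker}(h') \subseteq \operatorname{ker}(h)$.

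The snake lemma then supplies an exact sequence
\begin{align*}
0 \to \operatorname{ker}(\gamma) \to \operatorname{ker}(\beta) \to \operatorname{ker}(h') \to \operatorname{coker}(\gamma) \to \operatorname{coker}(\beta).
\end{align*}
Because $\beta$ is surjective we have $\operatorname{coker}(\beta) = 0$, so $\operatorname{ker}(\gamma)$ injects into $\operatorname{ker}(\beta)$ and $\operatorname{coker}(\gamma)$ is a quotient of $\operatorname{ker}(h')$; this yields the two stated size bounds. To deduce that $\gamma$ is a pseudo-isomorphism it remains to verify that each bounding group is finite, which follows from the isomorphism $(\ref{rankr isomorphism})$: since $\phi[\mathfrak{p}^{\infty}] \simeq (F_{\mathfrak{p}}/A_{\mathfrak{p}})^{\oplus r}$ is cofinitely generated over $A_{\mathfrak{p}}$, each $H^0$ above is cofinitely generated, and tensoring a cofinitely generated $A_{\mathfrak{p}}-$module with the finite residue field $\mathbb{F}_{\mathfrak{p}}$ gives a finite-dimensional $\mathbb{F}_{\mathfrak{p}}-$vector space. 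The main obstacle is purely bookkeeping: one must confirm that the bottom row of the diagram genuinely remains exact after passage to $\mathfrak{p}-$torsion, and that the local Kummer identification holds uniformly at every place $w \in S(\mathcal{F})$.
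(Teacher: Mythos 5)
Your proposal is correct and follows essentially the same route as the paper's proof: apply the snake lemma to the given commutative diagram, use the surjectivity of $\beta$ to truncate the connecting sequence, identify $\operatorname{ker}(\beta)$ and bound $\operatorname{ker}(h')$ by $\operatorname{ker}(h)$ via the Kummer sequence coming from $(\ref{galoiscohomology})$, and then deduce finiteness from the isomorphism $\phi[\mathfrak{p}^{\infty}] \simeq (F_{\mathfrak{p}}/A_{\mathfrak{p}})^{\oplus r}$.
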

\begin{proof} By $\emph{Snake Lemma}$, the commutative diagram above yields that
\begin{align*}
    0 \rightarrow \operatorname{ker} (\gamma) \rightarrow \operatorname{ker}(\beta) \rightarrow \operatorname{ker} (h^{\prime}) \rightarrow \operatorname{coker} (\gamma) \rightarrow 0
\end{align*} since the map $\beta$ is surjective. Therefore, we conclude that
\begin{align*}
    & \# \operatorname{ker} (\gamma) \leq \# \operatorname{ker} (\beta) = \# \left(H^0(F_S/\mathcal{F}, \phi[\mathfrak{p}^{\infty}])\otimes_{A_{\mathfrak{p}}}\mathbb{F}_{\mathfrak{p}}\right), \\
    &  \# \operatorname{coker} (\gamma) \leq \operatorname{ker} (h^{\prime}) \leq \# \operatorname{ker}(h)\leq \# \prod_{w\in S(\mathcal{F})}\left(H^0(\mathcal{F}_w^{\operatorname{sep}}/\mathcal{F}_w, \phi[\mathfrak{p}^{\infty}])\otimes_{A_{\mathfrak{p}}}\mathbb{F}_{\mathfrak{p}}\right).
\end{align*} Since $\phi[\mathfrak{p}^{\infty}] \simeq (F_{\mathfrak{p}}/A_{\mathfrak{p}})^{\oplus r}$ where $r$ is the rank of $\phi$ ($\ref{rankr isomorphism}$), we further deduce that
\begin{align*}
     \# \left(H^0(F_S/\mathcal{F}, \phi[\mathfrak{p}^{\infty}])\otimes_{A_{\mathfrak{p}}}\mathbb{F}_{\mathfrak{p}}\right) \leq r\# \mathbb{F}_\mathfrak{p},  \#\left( \prod_{w\in S(\mathcal{F})}\left(H^0(\mathcal{F}_w^{\operatorname{sep}}/\mathcal{F}_w, \phi[\mathfrak{p}^{\infty}])\otimes_{A_{\mathfrak{p}}}\mathbb{F}_{\mathfrak{p}}\right)\right) \leq r\#S(\mathcal{F})\# \mathbb{F}_\mathfrak{p}.
\end{align*}
This concludes the proof of the lemma.
\end{proof} \noindent The following proposition gives the crucial equivalence between the finiteness of residual fine Selmer group and the statement $(1)$ of $\textbf{Theorem }\ref{theorem 1.3}$.
\begin{proposition} \label{proposition 5.4}With respect to the notations above, the following statements are equivalent:
\begin{enumerate}[label={\arabic{enumi}}.]
    \item The Pontryagin dual $Y^S(\phi[\mathfrak{p}^{\infty}]/\mathcal{F})$ is a finitely generated and torsion $\Lambda(A_{\mathfrak{p}})-$module such that its $\mu-$Iwasawa invariant vanishes.
    \item The group $\emph{Sel}_0^S(\phi[\mathfrak{p}]/\mathcal{F})$ is finite. 
\end{enumerate} Furthermore, if one of the assertions above holds, then the $\lambda-$invariant of $Y^S(\phi[\mathfrak{p}^{\infty}]/\mathcal{F})$ satisfies
\begin{align*}
     \lambda\leq {\dim}_{\mathbb{F}_\mathfrak{p}} \text{\normalfont\ Sel}_0^S(\phi[\mathfrak{p}]/\mathcal{F}) + \sum_{w \in S(\mathcal{F})} {\dim}_{\mathbb{F}_{\mathfrak{p}}}(H^0(\mathcal{F}_w^{\operatorname{sep}}/\mathcal{F}_w, \phi[\mathfrak{p^{\infty}}]\otimes_{ A_{\mathfrak{p}}}\mathbb{F}_{\mathfrak{p}}))
\end{align*} where $\mathbb{F}_{\mathfrak{p}}$ is the residue field $ A_{\mathfrak{p}}/\mathfrak{p}A_{\mathfrak{p}}$.
\end{proposition}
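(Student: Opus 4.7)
The plan is to deduce Proposition 5.4 by combining the abstract equivalence in Theorem 4.6 with the explicit pseudo-isomorphism supplied by Lemma 5.3. More precisely, I apply Theorem 4.6 to the $\mathfrak{p}$-primary $\Lambda(A_{\mathfrak{p}})$-module $M = \operatorname{Sel}_0^S(\phi[\mathfrak{p}^{\infty}]/\mathcal{F})$. Since $Y^S(\phi[\mathfrak{p}^{\infty}]/\mathcal{F}) = M^{\vee}$ by definition, Theorem 4.6 tells me that statement $(1)$ is equivalent to the finiteness of $M[\mathfrak{p}] = \operatorname{Sel}_0^S(\phi[\mathfrak{p}^{\infty}]/\mathcal{F})[\mathfrak{p}]$, and moreover gives the bound $\lambda \leq \dim_{\mathbb{F}_{\mathfrak{p}}} M[\mathfrak{p}]$.

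Next, I would use Lemma 5.3, which produces a pseudo-isomorphism
\begin{equation*}
\gamma: \operatorname{Sel}_0^S(\phi[\mathfrak{p}]/\mathcal{F}) \longrightarrow \operatorname{Sel}_0^S(\phi[\mathfrak{p}^{\infty}]/\mathcal{F})[\mathfrak{p}]
\end{equation*}
whose kernel and cokernel are both finite (with explicit bounds). Since finite kernel plus finite cokernel of a map between abelian groups makes finiteness of source and target equivalent, this immediately converts the condition "$M[\mathfrak{p}]$ finite" into the condition "$\operatorname{Sel}_0^S(\phi[\mathfrak{p}]/\mathcal{F})$ finite". Chaining these two equivalences proves $(1) \Longleftrightarrow (2)$.

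For the quantitative bound on $\lambda$, I would combine the inequality from Theorem 4.6 with the exact sequence
\begin{equation*}
0 \to \ker(\gamma) \to \operatorname{Sel}_0^S(\phi[\mathfrak{p}]/\mathcal{F}) \to \operatorname{Sel}_0^S(\phi[\mathfrak{p}^{\infty}]/\mathcal{F})[\mathfrak{p}] \to \operatorname{coker}(\gamma) \to 0,
\end{equation*}
which is a sequence of $\mathbb{F}_{\mathfrak{p}}$-vector spaces (since everything is killed by $\mathfrak{p}$). Taking $\mathbb{F}_{\mathfrak{p}}$-dimensions yields
\begin{equation*}
\dim_{\mathbb{F}_{\mathfrak{p}}} \operatorname{Sel}_0^S(\phi[\mathfrak{p}^{\infty}]/\mathcal{F})[\mathfrak{p}] \leq \dim_{\mathbb{F}_{\mathfrak{p}}} \operatorname{Sel}_0^S(\phi[\mathfrak{p}]/\mathcal{F}) + \dim_{\mathbb{F}_{\mathfrak{p}}} \operatorname{coker}(\gamma),
\end{equation*}
and I substitute the bound on $\#\operatorname{coker}(\gamma)$ supplied by Lemma 5.3, namely $\dim_{\mathbb{F}_{\mathfrak{p}}} \operatorname{coker}(\gamma) \leq \sum_{w \in S(\mathcal{F})} \dim_{\mathbb{F}_{\mathfrak{p}}} \bigl(H^0(\mathcal{F}_w^{\operatorname{sep}}/\mathcal{F}_w, \phi[\mathfrak{p}^{\infty}]) \otimes_{A_{\mathfrak{p}}} \mathbb{F}_{\mathfrak{p}}\bigr)$. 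Chaining this with the Theorem 4.6 bound gives the stated inequality for $\lambda$.

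There is no genuine obstacle in this argument because all the technical work has been isolated in Theorem 4.6 and Lemma 5.3; the only thing to be careful about is the identification $\lambda \leq \dim_{\mathbb{F}_{\mathfrak{p}}} M[\mathfrak{p}]$ coming from Theorem 4.6 (the $A/\mathfrak{p}$ appearing there is the same as $\mathbb{F}_{\mathfrak{p}}$ in the current notation) and ensuring that the inequalities on cardinalities in Lemma 5.3 genuinely translate to inequalities on $\mathbb{F}_{\mathfrak{p}}$-dimensions — which they do, precisely because $\operatorname{coker}(\gamma)$ is a quotient of an $\mathfrak{p}$-torsion module and the bound in Lemma 5.3 is itself presented as an upper bound by an $\mathbb{F}_{\mathfrak{p}}$-vector space dimension.
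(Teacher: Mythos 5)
Your proposal is correct and mirrors the paper's own argument exactly: both deduce the equivalence by chaining Theorem \ref{theorem 4.6} (applied to $M = \operatorname{Sel}_0^S(\phi[\mathfrak{p}^{\infty}]/\mathcal{F})$) with the pseudo-isomorphism $\gamma$ of Lemma \ref{lemma 5.3}, and both derive the $\lambda$-bound by combining $\lambda \leq \dim_{\mathbb{F}_{\mathfrak{p}}} M[\mathfrak{p}]$ with the dimension count coming from the four-term exact sequence for $\gamma$ and the bound on $\operatorname{coker}(\gamma)$. No gaps.
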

\begin{proof} We denote the fine Selmer group $\operatorname{Sel}_0^S(\phi[\mathfrak{p}^{\infty}]/\mathcal{F})$ by $M$. Since $M$ is $\mathfrak{p}-$primary and it is a $\Lambda(A_{\mathfrak{p}})-$module. We know that $M[\mathfrak{p}]$ is finite if and only if the Pontryagin dual of $M$ is finitely generated and torsion module over $\Lambda(A_{\mathfrak{p}})$ such that its $\mu-$invariant vanishes as a result of $\textbf{Theorem }\ref{theorem 4.6}$. On the other hand, we have that $M[\mathfrak{p}]$ is finite if and only if $\operatorname{Sel}^S_0(\phi[\mathfrak{p}]/\mathcal{F})$ is finite by $\textbf{Lemma }\ref{lemma 5.3}$. This concludes the proof for the equivalence in the statement. Furthermore, the last part of $\textbf{Theorem }\ref{theorem 4.6}$ gives that the $\lambda-$invariant of the Pontryagin dual of $M$ satisfies
\begin{align*}
    \lambda \leq \operatorname{dim}_{\mathbb{F}_{\mathfrak{p}}} M[\mathfrak{p}].
\end{align*} Since $M[\mathfrak{p}] = \operatorname{Sel}_0^S(\phi[\mathfrak{p}^{\infty}]/\mathcal{F})[\mathfrak{p}]$, we further conclude from $\textbf{Lemma }\ref{lemma 5.3}$ that
\begin{align*}
    \lambda \leq \operatorname{dim}_{\mathbb{F}_{\mathfrak{p}}} M[\mathfrak{p}] &\leq \operatorname{dim}_{\mathbb{F}_{\mathfrak{p}}} \operatorname{Sel}_0^S(\phi[\mathfrak{p}]/\mathcal{F}) + \operatorname{dim}_{\mathbb{F}_{\mathfrak{p}}} \operatorname{coker} (\gamma)  \\
    &\leq \operatorname{dim}_{\mathbb{F}_{\mathfrak{p}}} \operatorname{Sel}_0^S(\phi[\mathfrak{p}]/\mathcal{F}) + \sum_{w\in S(\mathcal{F})}\operatorname{dim}_{\mathbb{F}_{\mathfrak{p}}}\left(H^0(\mathcal{F}_w^{\operatorname{sep}}/\mathcal{F}_w, \phi[\mathfrak{p}^{\infty}])\otimes_{A_{\mathfrak{p}}}\mathbb{F}_{\mathfrak{p}}\right).
\end{align*} This concludes the proof of the proposition.
\end{proof}
\begin{proposition} \label{proposition 5.5} With respect to the notations above, the residual fine Selmer group $\emph{Sel}_0^S(\phi[\mathfrak{p}]/\mathcal{F})$ is finite
\end{proposition}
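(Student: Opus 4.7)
The plan is to trivialize the Galois action by passing to a finite extension, then identify (up to finite error) the residual fine Selmer group with a group of continuous homomorphisms out of a mod-$p$ Iwasawa module, which will be finite by \textbf{Theorem }\ref{theorem 3.12}. The key observation is that $\phi[\mathfrak{p}]\simeq \mathbb{F}_{\mathfrak{p}}^{\oplus r}$ is a finite $\mathbb{F}_p$-vector space, so after trivialization the Selmer condition becomes an unramified, abelian, pro-$p$, split-at-$S$ condition, which is exactly what Section 3 controls.

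First, I would set $L_0 := F(\phi[\mathfrak{p}])$. Because $\phi[\mathfrak{p}]$ is finite, the Galois action gives a map $\operatorname{Gal}(F^{\operatorname{sep}}/F)\to \operatorname{GL}_r(\mathbb{F}_{\mathfrak{p}})$ with finite image, so $L_0/F$ is finite Galois and (by the N\'eron--Ogg--Shafarevich-type criterion for Drinfeld modules) unramified outside $S$, so $L_0\subseteq F_S$. Put $L := L_0 \cdot \mathcal{F}$, a finite Galois extension of $\mathcal{F}$. The inflation-restriction sequence
\begin{align*}
0 \to H^1(L/\mathcal{F},\phi[\mathfrak{p}]) \to H^1(F_S/\mathcal{F},\phi[\mathfrak{p}]) \to H^1(F_S/L,\phi[\mathfrak{p}])^{\operatorname{Gal}(L/\mathcal{F})}
\end{align*}
has finite kernel since the first term is the cohomology of a finite group with coefficients in a finite module. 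Hence it suffices to bound the image of $\operatorname{Sel}_0^S(\phi[\mathfrak{p}]/\mathcal{F})$ in the rightmost term.

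Next, since $\operatorname{Gal}(F_S/L)$ acts trivially on $\phi[\mathfrak{p}]$, the rightmost term equals $\operatorname{Hom}(\operatorname{Gal}(F_S/L),\phi[\mathfrak{p}])^{\operatorname{Gal}(L/\mathcal{F})}$. The Selmer conditions force such homomorphisms to vanish on the decomposition groups at every place of $L$ above $S$, and because $\phi[\mathfrak{p}]$ is $p$-torsion they factor through the maximal pro-$p$ abelian quotient of $\operatorname{Gal}(F_S/L)$ in which all such places split completely. Letting $\mathcal{L}$ be the constant $\mathbb{Z}_p$-extension of $L_0$, which contains $L$ as a finite-index subfield, this quotient is identified, up to a finite further inflation-restriction error across $\mathcal{L}/L$, with $X_S(\mathcal{L}) = \operatorname{Gal}(H_p^S(\mathcal{L})/\mathcal{L})$ as defined in Section 3.

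Finally, I would apply \textbf{Theorem }\ref{theorem 3.12} to $L_0$ and its constant $\mathbb{Z}_p$-extension $\mathcal{L}$: the module $X_S(\mathcal{L})$ has vanishing $\mu$-invariant, so $X_S(\mathcal{L})/pX_S(\mathcal{L})$ is finite. Since every continuous homomorphism from a pro-$p$ group into the $p$-torsion module $\phi[\mathfrak{p}]$ factors through the quotient by $p$, the group $\operatorname{Hom}(X_S(\mathcal{L}),\phi[\mathfrak{p}])$ is finite, and tracing back through the identifications yields the finiteness of $\operatorname{Sel}_0^S(\phi[\mathfrak{p}]/\mathcal{F})$. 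The main obstacle I anticipate is the careful bookkeeping of the finite-index discrepancy between $L$ and $\mathcal{L}$ together with the inflation-restriction corrections at each decomposition group above $S$, so as to rigorously match Selmer classes with continuous homomorphisms out of $X_S(\mathcal{L})$ modulo a finite error.
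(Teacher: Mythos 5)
Your proposal is essentially the paper's proof: pass to $L_0=F(\phi[\mathfrak{p}])$, apply inflation-restriction over its constant $\mathbb{Z}_p$-extension $\mathcal{L}$ (the inflation term being finite), observe that Selmer classes restrict to homomorphisms factoring through $X_{S(L_0)}(\mathcal{L})/pX_{S(L_0)}(\mathcal{L})$, and invoke \textbf{Theorem }\ref{theorem 3.12}. The only cleanup is that the ``finite-index discrepancy'' you anticipate does not arise: $L_0\cdot\mathcal{F}$ already equals the constant $\mathbb{Z}_p$-extension of $L_0$ (adjoining $\mathbb{F}_{q^{p^n}}$ for all $n$ is the same as adjoining the $p$-power constant tower over the full constant field of $L_0$), so $L=\mathcal{L}$ and no further inflation-restriction across $\mathcal{L}/L$ is needed.
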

\begin{proof} We consider the finite extension $L/F$ where $L = F(\phi[\mathfrak{p}])$ and we denote the constant $\mathbb{Z}_p-$extension of $L$ by $\mathcal{L}$. Furthermore, we consider the inflation-restriction sequence
\begin{align} \label{inf-res1}
    0 \rightarrow H^1(\mathcal{L}/\mathcal{F},\phi[\mathfrak{p}])\xrightarrow {\textbf{inf}} H^1(F_S/\mathcal{F},\phi[\mathfrak{p}]) \xrightarrow{\textbf{res}}  H^1(F_S/\mathcal{L},\phi[\mathfrak{p}]).
\end{align} Since $\phi[\mathfrak{p}]$ is trivial under the action of $\operatorname{Gal}(\mathcal{L}/\mathcal{F})$, we conclude that
\begin{align*}
    H^1(F_S/\mathcal{L},\phi[\mathfrak{p}]) \simeq \operatorname{Hom}_{\operatorname{Grps}}(\operatorname{Gal}(F_S/\mathcal{L}), \mathbb{F}_{\mathfrak{p}}^{\oplus r}).
\end{align*}  Denote the set of places in $L$ above $S$ by $S(L)$ and denote the inverse limit of
\begin{align*}
   \varprojlim_n \operatorname{Gal}(H_p^{S(L)}(L_n)/L_n)
\end{align*} by $X_{S(L)}(\mathcal{L})$. For any $f$ in $\operatorname{Sel}_0^S(\phi[\mathfrak{p}]/\mathcal{F})$ contained in $ H^1(F_S/\mathcal{F},\phi[\mathfrak{p}])$, there is a unique factorization of $\operatorname{res}(f)$
\begin{align} \label{infres}
    \operatorname{res}(f): \frac{X_{S(L)}(\mathcal{L})}{pX_{S(L)}(\mathcal{L})} \longrightarrow \mathbb{F}_{\mathfrak{p}}^{\oplus r}
\end{align} because $\mathbb{F}_{\mathfrak{p}}^{\oplus r}$ is an abelian group with characteristic $p$ and the crossed homomorphism $f$ is taken from $\operatorname{Sel}_0^S(\phi[\mathfrak{p}]/\mathcal{F})$. Since the quotient group $\frac{X_{S(L)}(\mathcal{L})}{pX_{S(L)}(\mathcal{L})}$ is finite as a result of $\textbf{Theorem }\ref{theorem 3.12}$, the image of $\textbf{res}$ in $(\ref{inf-res1})$ restricting to $\operatorname{Sel}_0^S(\phi[\mathfrak{p}]/\mathcal{F})$ is finite. On the other hand, the cohomology group $H^1(\mathcal{L}/\mathcal{F},\phi[\mathfrak{p}])$ is finite because the Galois group $\operatorname{Gal}(\mathcal{L}/\mathcal{F})$ is finite. Hence, we conclude that $\operatorname{Sel}_0^S(\phi[\mathfrak{p}]/\mathcal{F})$ is finite.
\end{proof}\noindent
Now we give the proof for $\textbf{Theorem }\ref{theorem 1.3}$.
\begin{proof} The proof for $(1)$ in $\textbf{Theorem }\ref{theorem 1.3}$ follows as a direct consequence of $\textbf{Proposition }\ref{proposition 5.4}$ and $\textbf{Proposition }\ref{proposition 5.5}$. Since $(1)$ of $\textbf{Theorem }\ref{theorem 1.3}$, there is a pseudo-isomorphism 
\begin{align*}
    \varphi: Y^S(\phi[\mathfrak{p}^{\infty}]/\mathcal{F}) \longrightarrow (A_{\mathfrak{p}})^{\oplus \lambda}
\end{align*} where $\lambda$ is the $\lambda-$invariant of  $Y^S(\phi[\mathfrak{p}^{\infty}]/\mathcal{F})$. Since $A_{\mathfrak{p}}$ is a P.I.D,  we repeat the argument as in the proof of $\textbf{Theorem }\ref{theorem 3.12}$ and we have 
\begin{align*}
    Y^S(\phi[\mathfrak{p}^{\infty}]/\mathcal{F}) \simeq (A_{\mathfrak{p}})^{\oplus \lambda} \oplus \operatorname{ker}(\varphi),
\end{align*} which concludes the proof of $(2)$ in $\textbf{Theorem }\ref{theorem 1.3}$. Finally, $(3)$ in $\textbf{Theorem }\ref{theorem 1.3}$ follows directly from the last part of $\textbf{Proposition }\ref{proposition 5.4}$.
\end{proof}   
\begin{center}

   {\normalfont\bibliography{sn-bibliography}}
\end{center}
\end{document}